\newtheorem{theorem}{Theorem}[section]
\newtheorem{lemma}[theorem]{Lemma}
\newtheorem{proposition}[theorem]{Proposition}
\newtheorem{corollary}[theorem]{Corollary} 
\crefname{observation}{Observation}{Observations}
\newtheorem{conjecture}[theorem]{Conjecture}
\crefname{claim}{Claim}{Claims}
\newcommand\abs[1]{\lvert #1\rvert}
\newcommand\tri{\boxslash}
\newcommand{\dist}{\mathrm{dist}}
\def\K_#1{{K_{#1}}}
\def\S_#1{\overline{K_{#1}}}
\newcommand\extrafootertext[1]{%
    \bgroup
    \renewcommand\thefootnote{\fnsymbol{footnote}}%
    \renewcommand\thempfootnote{\fnsymbol{mpfootnote}}%
    \footnotetext[0]{#1}%
    \egroup
}
\begin{document}
\title{Unavoidable pivot-minors in graphs of large rank-depth}

\author[1]{Jungho Ahn\thanks{Supported by the KIAS Individual Grant (CG095301) at Korea Institute for Advanced Study and Leverhulme Trust Research Project Grant RPG-2024-182.}}
\author[2]{Kevin Hendrey\thanks{Supported by the Institute for Basic Science (IBS-R029-C1).}\thanks{Supported by the Australian Research Council.}}
\author[3,4]{O-joung Kwon\textsuperscript{\textdagger}\thanks{Supported by the National Research Foundation of Korea (NRF) grant funded by the Ministry of Science and ICT (No. RS-2023-00211670).}}
\author[4]{Sang-il Oum\textsuperscript{\textdagger}}
\affil[1]{Department of Computer Science, Durham University, Durham, UK}
\affil[2]{School of Mathematics, Monash University, Melbourne, Australia}
\affil[3]{Department of Mathematics, Hanyang University, Seoul, South~Korea}
\affil[4]{Discrete Mathematics Group, Institute~for~Basic~Science~(IBS), Daejeon,~South~Korea.}
\affil[ ]{\small\textit{Email addresses:}
\texttt{jungho.ahn@durham.ac.uk} (Ahn),
\texttt{kevin.hendrey1@monash.edu} (Hendrey),
\texttt{ojoungkwon@hanyang.ac.kr} (Kwon),
\texttt{sangil@ibs.re.kr} (Oum).
}

\date\today
\maketitle

\begin{abstract}
    Shrub-depth and rank-depth are related graph parameters that are dense analogs of tree-depth.
    We prove that for every positive integer $t$, every graph of sufficiently large rank-depth contains a pivot-minor isomorphic to a path on $t$ vertices or a graph consisting of two disjoint cliques of size $t$ joined by a half graph.
    This answers an open problem raised by Kwon, McCarty, Oum, and Wollan in 2021.
\end{abstract}

\section{Introduction}

Tree-width, path-width, and tree-depth of graphs are well-known important parameters of graphs that have been developed with the study of graph minors, notably during the graph minors project of Robertson and Seymour from 1980s, see \cite{RS1985}.
For these graph parameters, we have precise characterizations of graph classes of bounded parameters in terms of forbidden minors.
We say a class of graphs is \emph{minor-closed} if every minor of a graph in the class is also contained in the class.

\begin{theorem}[Robertson and Seymour]
    Let $\mathcal C$ be a minor-closed class of graphs.
    Then the following hold.
    \begin{enumerate}[label=\rm (\roman*)]
        \item $\mathcal C$ has bounded tree-width if and only if it does not contain some planar graph~\cite{RS1986}.
        \item $\mathcal C$ has bounded path-width if and only if it does not contain some tree~\cite{RS1983}.
        \item $\mathcal C$ has bounded tree-depth if and only if it does not contain some path graph~\cite[4.4]{RS1985}.
    \end{enumerate}
\end{theorem}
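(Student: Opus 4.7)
The plan is to treat each of the three equivalences separately, and within each one to split the argument into an easy direction (the relevant forbidden structure must lie outside any class of bounded parameter) and a hard direction (excluding such a structure forces the parameter to be bounded). The easy directions all follow the same template: exhibit a sequence of graphs of the prescribed type whose parameter tends to infinity, and note that every graph of that type is a minor of some member of the sequence. Concretely, for (i) I would use the $k\times k$ grids, which are planar and have tree-width $k$ and contain every planar graph of bounded size as a minor (via an embedding argument); for (ii) I would use complete binary trees of increasing depth, which have unbounded path-width and contain every tree as a minor; and for (iii) I would use the paths $P_{2^k}$, whose tree-depth is exactly $k+1$, and note that every path is a subgraph of a longer path. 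If $\mathcal C$ has bounded parameter, then one of these graphs fails to appear, giving the desired forbidden minor.

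For the hard directions I would appeal to three structural results, increasing in difficulty. The cleanest is (iii): I would prove by induction on $k$ that any connected graph of tree-depth more than $k$ contains $P_{2^k}$ as a subgraph, by considering a root of an optimal elimination tree and analysing two branches whose depths sum to more than $k$. Contrapositively, excluding a fixed path $P_t$ bounds the tree-depth, and in fact gives an explicit logarithmic bound. For (ii) I would invoke Robertson and Seymour's excluded tree theorem: for every tree $T$ there exists $c(T)$ such that any graph of path-width at least $c(T)$ contains $T$ as a minor. The standard proof goes via the three-way decomposition of graphs with large path-width into a long path-decomposition that must repeatedly absorb branches, eventually realising $T$. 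For (i) I would cite the Excluded Grid Theorem, which states that a graph with no $k\times k$ grid minor has tree-width bounded by a function of $k$; combined with the fact that every planar graph embeds as a minor of a sufficiently large grid, this yields the equivalence.

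The main obstacle is undoubtedly the hard direction of (i), since the Excluded Grid Theorem is a deep structural result whose shortest known proofs are still substantial. In a self-contained write-up I would simply quote it as a black box and focus on assembling the three equivalences uniformly; the only original content needed is the embedding argument showing that planarity is the right obstruction (as opposed to, say, a specific grid), together with the straightforward path and tree cases. The induction for (iii) is the one piece I would write out in detail, since it illustrates the general philosophy—that bounded depth-type parameters are characterised by forbidding a single canonical ``unavoidable'' structure—that motivates the present paper's pivot-minor analog for rank-depth.
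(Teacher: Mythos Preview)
The paper does not prove this theorem at all; it is stated in the introduction purely as background, with each part attributed by citation to the original Robertson--Seymour papers. There is therefore no proof in the paper to compare your proposal against.

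That said, your outline contains a genuine error in part~(iii). You assert that every connected graph of tree-depth more than $k$ contains $P_{2^k}$ as a subgraph, and propose to prove this by analysing two branches of an optimal elimination tree. This is false: $K_3$ has tree-depth $3$ but contains no $P_4$. The exponential bound runs the other way---$P_{2^k}$ has tree-depth $k+1$, which you correctly use for the easy direction---but it does not invert. For the hard direction the correct argument is both simpler and gives a linear bound: in a connected graph, any depth-first-search tree is an elimination tree, and every root-to-leaf path of a DFS tree is an actual path in the graph. Hence if $G$ contains no $P_t$ as a subgraph, every DFS tree has height at most $t-1$, so the tree-depth is at most $t-1$. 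Your proposed induction on branches of an elimination tree cannot work because non-root vertices of an elimination tree need not be adjacent to anything below them, so you cannot splice two root-to-leaf paths into a single long path of $G$.
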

These parameters can be thought of as concrete ways to measure the complexity of minor-closed classes of graphs (which have bounded average degree unless they contain all graphs).
However, since each of these parameters is unbounded on the class of complete graphs, these parameters are not good measures of complexity for hereditary graph classes.
To address this, dense analogs of these parameters for graphs have been introduced.
These dense analogs have the property that any bound on the original parameter for a graph~$G$ translates naturally to a bound on the analogous parameter for both $G$ and its complement, despite the fact that the latter is dense.
Rank-width of graphs is a dense analog of tree-width, which was introduced by Oum and Seymour~\cite{OS2004}.
Linear rank-width of graphs is a dense analog of path-width and is a linearized version of rank-width, see~\cite{AFP2013}.
Rank-depth of graphs is a dense analog of tree-depth, which was introduced by DeVos, Kwon, and Oum~\cite{DKO2019}.
Shrub-depth was introduced earlier by Ganian, Hlin\v{e}n\'{y}, Ne{\v{s}}et{\v{r}}il, Obdr\v{z}\'{a}lek, Ossona de Mendez, and Ramadurai~\cite{GHNOOR2012} as a dense analog of tree-depth.
In fact, shrub-depth is equivalent to rank-depth in the sense that a class of graphs has bounded shrub-depth if and only if it has bounded rank-depth, shown by DeVos, Kwon, and Oum~\cite{DKO2019}.
We are concerned with the rank-depth of graphs and its precise definition will be given in \Cref{sec:preliminaries}.

Vertex-minors and pivot-minors play a similar role to that of minors in the study of dense graphs.
The definitions of vertex-minors and pivot-minors will be given in \Cref{sec:preliminaries}.
For vertex-minors, we have the following theorems and conjectures.
We say a class of graphs is \emph{vertex-minor-closed} if every vertex-minor of a graph in the class is also contained in the class.
A \emph{circle graph} is the intersection graph of chords of a circle.
\begin{theorem}[Geelen, Kwon, McCarty, and Wollan~\cite{GKMW2019}]
    A vertex-minor-closed class of graphs has bounded rank-width if and only if it does not contain some circle graph.
\end{theorem}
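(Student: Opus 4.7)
The easy direction is to exhibit circle graphs of arbitrarily large rank-width: a $k \times k$ grid-like chord diagram yields a circle graph of rank-width $\Theta(k)$. Since the class of circle graphs is vertex-minor-closed (Bouchet), a vertex-minor-closed class containing every circle graph must have unbounded rank-width. So the content of the theorem lies in the converse direction.

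For the hard direction one must show that for every circle graph $H$ there is an integer $k=k(H)$ such that every graph of rank-width at least $k$ contains $H$ as a vertex-minor. The natural route passes through the binary-matroid correspondence. First I would reduce to the bipartite case: by a sequence of local complementations, any graph $G$ has a bipartite vertex-minor whose rank-width is within a constant factor of that of $G$. A bipartite graph is the fundamental graph of a binary matroid $M$, and its rank-width matches the branch-width of $M$ up to a constant factor. One then invokes a structure/grid-type theorem for binary matroids of large branch-width (in the spirit of the matroid-minors project of Geelen, Gerards, and Whittle), which forces one of a short list of "universal" minors, such as large projective geometries or binary grid-like matroids, to appear. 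Translating such a matroid minor back through the fundamental-graph correspondence produces a vertex-minor of $G$ with an explicit chord-diagram-like structure.

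The main obstacle — and where I would expect to spend the bulk of the work — is the final embedding step: from such a universal bipartite vertex-minor, extract the chosen circle graph $H$ as a further vertex-minor. This reduces to a purely combinatorial statement about chord diagrams, namely that a sufficiently rich grid-like arrangement of chords contains, after local complementations and vertex deletions, any prescribed chord pattern of bounded size. I would expect this to require a careful Ramsey-type argument on chord diagrams together with separate case analysis for each of the universal matroid minors, since the projective-geometry case and the grid case yield very different chord structures and each will need its own embedding lemma. A secondary difficulty is bookkeeping: the constants $k(H)$ must compose across the three stages (graph $\to$ bipartite graph $\to$ binary matroid $\to$ chord diagram) without losing the property that large rank-width is preserved at each reduction.
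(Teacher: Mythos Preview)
This theorem is not proved in the paper at all: it is stated in the introduction as a known result, cited to Geelen, Kwon, McCarty, and Wollan~\cite{GKMW2019}, and used only as background motivation. There is no ``paper's own proof'' to compare your proposal against.

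As a side remark on your sketch itself: the actual proof in~\cite{GKMW2019} does not proceed via the binary-matroid correspondence you outline. The bipartite/fundamental-graph reduction only relates pivot-minors of bipartite graphs to minors of binary matroids, and local complementation (needed for general vertex-minors) does not translate cleanly into a matroid operation. The published argument instead develops a direct ``grid theorem for vertex-minors'' using rank-width-specific tools (local equivalence, tangles for the cut-rank function, and a dedicated analysis of how large rank-width forces a large comparability-grid-like vertex-minor, from which any circle graph can be extracted). Your step of reducing to bipartite graphs while controlling rank-width, and then invoking a matroid grid theorem, would at best yield a pivot-minor statement for bipartite graphs, not the full vertex-minor result for arbitrary graphs and arbitrary circle graphs.
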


\begin{conjecture}[Kant\'e and Kwon~\cite{KK2015}]
    A vertex-minor-closed class of graphs has bounded linear rank-width if and only if it does not contain some tree.
\end{conjecture}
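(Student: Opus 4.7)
The plan is to prove both halves of the biconditional. The easy half---that a vertex-minor-closed class $\mathcal{C}$ containing every tree has unbounded linear rank-width---follows from the classical fact that trees alone already realize unbounded linear rank-width (for instance, complete binary trees of depth $d$ have linear rank-width $\Theta(d)$), so the supremum of linear rank-widths in $\mathcal{C}$ is infinite.

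For the hard direction I would prove the contrapositive: if $\mathcal{C}$ is vertex-minor-closed and has unbounded linear rank-width, then every tree is a vertex-minor of some graph in $\mathcal{C}$. It suffices to show that for every tree $T$ there exists $N(T)$ such that every graph of linear rank-width at least $N(T)$ contains $T$ as a vertex-minor. My approach would leverage the paper's main theorem as a black box. Bounded rank-depth implies bounded linear rank-width (by collapsing a rank-depth decomposition into a linear one), so large linear rank-width forces large rank-depth, and the paper's theorem then produces either $P_t$ or the half-graph-joined-cliques graph $H_t$ as a pivot-minor, hence as a vertex-minor. The conjecture would then reduce to two auxiliary statements: \textbf{(A)} every tree $T$ is a vertex-minor of $P_t$ for $t$ sufficiently large in $|V(T)|$, and \textbf{(B)} every tree $T$ is a vertex-minor of $H_t$ for $t$ sufficiently large in $|V(T)|$.

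Statement (B) should follow from $H_t$ containing $K_t$ as an induced subgraph, combined with the expectation that every tree on at most $t$ vertices is a vertex-minor of $K_t$. The main obstacle is Statement (A). Small cases are encouraging---the claw $K_{1,3}$ can be realized as a vertex-minor of $P_6$ via local complementations at the third and fourth vertices followed by two deletions---and one might hope that an inductive argument on $T$, attaching one leaf at a time by appending a short segment to the path and performing a carefully chosen local complementation, carries through. If (A) fails for some tree, the backup plan is a bootstrap argument: after extracting $P_t$ from $G$, the residue $G \setminus V(P_t)$ may still have large linear rank-width, and combining the extracted path with further vertex-minor extractions from this residue should produce the required branching. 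I expect the decisive step to be exactly this bootstrap, specifically controlling how local complementations interact with the already-identified path substructure when one attempts to graft additional tree structure onto it.
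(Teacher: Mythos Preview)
The statement you are attempting to prove is an \emph{open conjecture}; the paper lists it as such and does not offer a proof. So there is no ``paper's own proof'' to compare against, and any purported proof must be scrutinised on its own merits.

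Your hard direction collapses at statements (A) and (B), both of which are false. Linear rank-width is monotone under vertex-minors (local complementation preserves the cut-rank function, deletion can only decrease it). Paths have linear rank-width~$1$, so every vertex-minor of every $P_t$ has linear rank-width at most~$1$; but complete binary trees have unbounded linear rank-width, so they are not vertex-minors of any path. This kills (A). The same monotonicity kills (B): $K_t$ has linear rank-width~$1$, and $K_t\tri K_t$ has linear rank-width at most~$2$ (order the vertices $u_1,v_1,u_2,v_2,\ldots$ and check that every prefix cut has rank at most~$2$), so neither can have a complete binary tree of large depth as a vertex-minor.

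This is not a repairable gap in your route. The paper's main theorem produces, from large rank-depth, one of two pivot-minors each of which has \emph{bounded} linear rank-width; that is exactly why those two graphs are the right obstructions for rank-depth but cannot serve as stepping stones to arbitrary trees. Your ``bootstrap'' backup inherits the same problem: once you pass to a vertex-minor of bounded linear rank-width, you have thrown away the very complexity you need. A proof of the Kant\'e--Kwon conjecture will require genuinely new structure theory for linear rank-width, not a reduction to the rank-depth result.
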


\begin{theorem}[Kwon, McCarty, Oum, and Wollan~\cite{KMOW2019}]\label{thm:vertex-minor-rank-depth}
    A vertex-minor-closed class of graphs has bounded rank-depth if and only if it does not contain some path graph.
\end{theorem}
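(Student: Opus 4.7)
I would treat the two directions separately. The forward direction reduces to showing that the rank-depth of the path $P_n$ grows with $n$. Any rank-decomposition of $P_n$ partitions $V(P_n)$ at its root edge into two parts, and at least one of these parts contains an induced subpath on roughly $n/2$ vertices; since induced subgraphs are vertex-minors, this gives a recursion of the form $\mathrm{rd}(P_n) \ge 1 + \mathrm{rd}(P_{\lfloor n/2 \rfloor})$ up to a constant, yielding $\mathrm{rd}(P_n) = \Omega(\log n)$. Combined with the monotonicity of rank-depth under vertex-minors, any vertex-minor-closed class of bounded rank-depth must omit $P_t$ as a vertex-minor for some $t$.

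For the converse I would prove the contrapositive: there is a function $f$ such that $\mathrm{rd}(G) \ge f(t)$ implies $G$ contains $P_t$ as a vertex-minor. The natural approach exploits the recursive definition of rank-depth. After reducing to the connected case, if $G$ is connected and $\mathrm{rd}(G) \ge d$ then for any vertex $v$ and any local complementation applied at $v$, some connected component of the resulting graph with $v$ removed still has rank-depth at least $d-1$. Iterating this greedily produces a long sequence of vertices $v_1, v_2, \ldots, v_N$ together with a nested descending sequence of connected subgraphs $G \supseteq G_1 \supseteq G_2 \supseteq \cdots$, each of still-large rank-depth. A Ramsey-type pigeonhole on the adjacency pattern between each $v_i$ and its deeper subgraph---together with a pigeonhole on whether consecutive $v_i, v_{i+1}$ are joined by an edge---then extracts a long subsequence of uniform behaviour, from which a canonical pivoting sequence realises $P_t$ as a vertex-minor after discarding irrelevant vertices.

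The main obstacle is controlling how local complementations applied at deeper steps interact with the adjacencies of earlier $v_i$: a local complementation at $v_j$ can flip edges incident to common neighbours of $v_j$ and previously chosen vertices, potentially disturbing the already extracted structure. I would address this by selecting each $v_i$ from a single principal branch of the recursion witness, so that its interaction with later vertices is determined once and for all by its adjacency pattern to the current deep subgraph, and by using the Ramsey step to find a subsequence whose ancestor-adjacencies are uniform enough that a single explicit pivot-and-delete sequence converts the chosen vertices into an induced path. A final cleanup by vertex deletions then exhibits $P_t$ as a vertex-minor.
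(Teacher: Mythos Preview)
The paper does not itself prove this theorem---it is cited to~\cite{KMOW2019}---but it does reprove it as a corollary of the main Theorem~\ref{thm:main}, whose engine is M\"ahlmann's Theorem~\ref{thm:Mahlmann25}: a graph of large rank-depth contains one of four explicit obstructions ($K_s\tri K_s$, $K_s\tri\overline{K_s}$, $\overline{K_s}\tri\overline{K_s}$, or a flipped $sP_s$) as an \emph{induced subgraph}, and the work of the paper is a concrete pivot-by-pivot analysis showing each of those yields $P_t$ or $K_t\tri K_t$ as a pivot-minor, hence $P_t$ as a vertex-minor. Your recursive/Ramsey outline is an entirely different strategy.

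That said, your outline has genuine gaps. In the forward direction, a rank-depth decomposition $(T,\sigma)$ has no ``root edge'': $T$ is an arbitrary tree constrained only by radius, and the partition at its centre may have arbitrarily many parts, so the bipartition-and-recurse argument you sketch (which is really a rank-width argument) does not apply as stated. In the backward direction, the claim that some component of $G\ast v - v$ has rank-depth at least $d-1$ is not immediate---rank-depth is not defined by such a recursion, and both vertex deletion and passage to components cost constants that must be tracked---but the real problem is the extraction step. You propose to Ramsey on the adjacency pattern of each $v_i$ to the deeper subgraphs and then apply an unspecified ``canonical pivoting sequence'' along a ``principal branch of the recursion witness.'' Rank-depth has no recursion witness: the decomposition tree is not a recursive decomposition of the graph, and the graphs $G_i$ you build live in different locally-complemented copies, so it is unclear in which graph the adjacencies you Ramsey on are even measured. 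Without specifying what uniform pattern Ramsey is supposed to find and exactly which local complementations convert that pattern into an induced $P_t$, you have restated the difficulty rather than resolved it---and that step is precisely where all the content of the theorem lies.
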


We say a class of graphs is \emph{pivot-minor-closed} if every pivot-minor of a graph in the class is also contained in the class.
As we shall see clearly from its definition, every pivot-minor of a graph is a vertex-minor of the graph, so every vertex-minor-closed class of graphs is pivot-minor-closed.
There are very close relationships between pivot-minors of bipartite graphs and minors of binary matroids, see~\cite{Oum2004}, which gives a strong motivation to study pivot-minors of graphs.
\Cref{thm:vertex-minor-rank-depth} motivated Kwon, McCarty, Oum, and Wollan~\cite{KMOW2019} to conjecture that
\begin{quote}
    every pivot-minor-closed class of graphs has bounded rank-depth if and only if it does not contain~$P_t$ or $K_t\tri K_t$ for some~$t$, where~$P_t$ is a path on~$t$ vertices and $K_t\tri K_t$ is the graph obtained from two disjoint copies of~$K_t$ by adding a half graph between them.
\end{quote} 
We answer this conjecture in the affirmative by proving the following theorem.

\begin{figure}
    \centering
    \begin{tikzpicture}[scale=0.8]
        \tikzstyle{v}=[circle, draw, solid, fill=black, inner sep=0pt, minimum width=3pt]

        \node[v,label={[xshift=-3pt]left:{$u_1$}}] (u1) at (0,3) {};
        \node[v,label={[xshift=-9pt]left:{$u_2$}}] (u2) at (0,2) {};
        \node[v,label={[xshift=-9pt]left:{$u_3$}}] (u3) at (0,1) {};
        \node[v,label={[xshift=-3pt]left:{$u_4$}}] (u4) at (0,0) {};
        \node[v,label=right:{$v_1$}] (v1) at (2.5,3) {};
        \node[v,label=right:{$v_2$}] (v2) at (2.5,2) {};
        \node[v,label=right:{$v_3$}] (v3) at (2.5,1) {};
        \node[v,label=right:{$v_4$}] (v4) at (2.5,0) {};

        \draw (u1)--(u4);
        \draw (v1)--(u1);
        \draw (v1)--(u2);
        \draw (v1)--(u3);
        \draw (v1)--(u4);
        \draw (v2)--(u2);
        \draw (v2)--(u3);
        \draw (v2)--(u4);
        \draw (v3)--(u3);
        \draw (v3)--(u4);
        \draw (v4)--(u4);

        \draw[bend right=30] (u1) to (u4);
        \draw[bend right=20] (u1) to (u3);
        \draw[bend right=20] (u2) to (u4);
    \end{tikzpicture}
    \caption{The graph $K_4\tri\overline{K_4}$.}
    \label{fig:tri}
\end{figure}

\begin{theorem}\label{thm:main}
    There exists a function $f:\mathbb{N}\to\mathbb{N}$ such that for every $t\in\mathbb{N}$, every graph of rank-depth at least~$f(t)$ has a pivot-minor isomorphic to~$P_t$ or $K_t\tri K_t$.
\end{theorem}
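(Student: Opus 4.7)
My plan is to bootstrap from \Cref{thm:vertex-minor-rank-depth}, which already guarantees a long induced path $P_N$ as a vertex-minor whenever the rank-depth is sufficiently large. The new content would lie in the passage from vertex-minors to pivot-minors: the obstruction to replacing local complementations by pivots is precisely what should force the half-graph structure $K_t\tri K_t$ to appear.

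Concretely, I would apply \Cref{thm:vertex-minor-rank-depth} with $N$ a sufficiently large function of $t$, the exact growth rate to be pinned down by a subsequent Ramsey-type argument, obtaining a sequence $\sigma$ of local complementations together with a deletion set that witness $P_N$ as a vertex-minor of $G$. Since a pivot on an edge $uv$ equals the composition $u\ast v\ast u$ of three local complementations, pivot-minor sequences form a structured subclass of vertex-minor sequences. I would analyse how far $\sigma$ deviates from such a structured sequence by tracking, along each vertex of the resulting $P_N$, a residual local complementation that cannot be absorbed into a pivot. A Ramsey-type reduction on $P_N$ then yields either a long sub-path on which the residuals vanish, so that $P_t$ itself is realised as a pivot-minor, or a long sub-path on which the residuals are uniformly nontrivial.

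In the latter case I would argue that applying the uncancelable local complementations coherently to a long segment of $P_N$ turns the two alternating vertex classes of that segment into cliques and creates a half-graph adjacency across them: the point is that a local complementation at an internal vertex $u$ of an induced path toggles all edges inside $N(u)$, so a consistent pattern applied at consecutive vertices accumulates exactly into the staircase adjacency defining $K_t\tri K_t$. A final Ramsey clean-up would then regularise the precise edges of the two sides and produce the desired copy of $K_t\tri K_t$.

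The main obstacle, and the step I expect to require the greatest care, is controlling the non-commutativity of local complementations in $\sigma$ and ensuring that the structure extracted in a transformed graph actually survives back in $G$ as a genuine pivot-minor. I anticipate that this will call for the framework of isotropic systems and Bouchet's connectivity function, or alternatively for the bipartite pivot-minor/binary-matroid correspondence of \cite{Oum2004}, combined with a careful normal-form reduction of $\sigma$ performed before any Ramsey argument is applied, so that the parity invariants driving the dichotomy between the $P_t$ outcome and the $K_t\tri K_t$ outcome remain well defined throughout the argument.
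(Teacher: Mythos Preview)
Your proposal is a plan rather than a proof, and its central step has a genuine gap. You want to take a vertex-minor witness $\sigma$ for $P_N$ and factor it into pivots plus a ``residual'' local complementation attached to each vertex of $P_N$, then run a Ramsey argument on those residuals. But the local complementations in $\sigma$ are performed in $G$, not in $P_N$; they do not commute; and there is no canonical normal form that assigns a single residual bit to each vertex of the eventual path. Bouchet's theory does give, for each deleted vertex $v$, three locally equivalent choices $G\setminus v$, $G\ast v\setminus v$, $G\wedge uv\setminus v$, but these choices interact globally, and nothing in your outline explains why a uniform non-pivot choice along a segment forces a \emph{pivot}-minor isomorphic to $K_t\tri K_t$ back in $G$. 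Your heuristic that consecutive local complementations on an induced path ``accumulate into a staircase'' concerns what happens \emph{inside} the path (where $\ast u_i$ merely toggles edges among $\{u_{i-1},u_{i+1}\}$), but $K_t\tri K_t$ would have to live in $G$, where the neighbourhoods are uncontrolled. As written, the argument never certifies that the half-graph you build is a pivot-minor of $G$ rather than of some locally equivalent graph, so the dichotomy is not established.

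The paper's route is entirely different and avoids the vertex-minor-to-pivot-minor conversion altogether. It invokes M\"ahlmann's theorem (\cref{thm:Mahlmann25}), which guarantees that a graph of large rank-depth contains, as an \emph{induced subgraph}, one of $K_s\tri K_s$, $K_s\tri\overline{K_s}$, $\overline{K_s}\tri\overline{K_s}$, or a flipped $sP_s$. Since induced subgraphs are already pivot-minors, the first case yields $K_t\tri K_t$ immediately, the next two yield $P_t$ via \cref{lem:second,lem:third}, and the real work of the paper (\cref{prop:preprocess,prop:1-flip}) is an explicit pivot-by-pivot reduction showing that any flipped $sP_s$ contains $P_t$ as a pivot-minor. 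The $K_t\tri K_t$ outcome thus comes straight from M\"ahlmann's classification, not from analysing sequences of local complementations.
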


\cref{thm:main} implies \cref{thm:vertex-minor-rank-depth}, because $K_t\tri K_t$ contains $K_{t-1}\tri \overline{K_{t-1}}$ as a vertex-minor, which has $P_t$ as a vertex-minor, see~
\cref{fig:tri} and~\cref{lem:second}.
Another consequence of \cref{thm:main} is about binary matroids.
By using the relationship between pivot-minors of bipartite graphs and minors of binary matroids, we can easily deduce that every binary matroid of sufficiently large branch-depth has the cycle matroid of a large fan graph as a minor.
This has been proved earlier by Gollin, Hendrey, Mayhew, and Oum~\cite{GHMO2021}, more generally for matroids representable over a fixed finite field or quasi-graphic matroids.

One of the key ingredients in the proof of \Cref{thm:main} is the result of M\"{a}hlmann~\cite{Mahlmann25}, which gives a list of unavoidable induced subgraphs in a class graphs of unbounded rank-depth.

This paper is organized as follows.
In \cref{sec:preliminaries}, we introduce terminology from graph theory, including the formal definitions of a pivot-minor and the rank-depth of a graph.
In \cref{sec:flipped graphs}, we introduce the definition of a flipped $mP_n$, which is obtained from the disjoint union of~$m$ copies of~$P_n$ by complementing the edge relation between some sets of vertices.
We show that if~$m$ is sufficiently large, then every flipped $mP_n$ has a pivot-minor isomorphic to the graph obtained from~$P_n$ by complementing the edge relation inside some set of vertices.
In \cref{sec:1-flip}, we show that for a sufficiently large~$n$, the pivot-minor found in \cref{sec:flipped graphs} has~$P_t$ as a pivot-minor, and in \cref{sec:proof}, we prove \Cref{thm:main}.

\section{Preliminaries}\label{sec:preliminaries}

Graphs in this paper are finite, undirected, and simple.
For an integer~$n$, we denote by~$[n]$ the set of positive integers less than or equal to~$n$.
The \emph{symmetric difference} of sets~$X$ and~$Y$, denoted by ${X\triangle Y}$, is the set ${(X\setminus Y)\cup(Y\setminus X)}$.
For a collection~$\mathcal{P}$ of disjoint sets, we denote by~$\bigcup\mathcal{P}$ the set $\bigcup_{X\in\mathcal{P}}X$.
A \emph{coarsening} of~$\mathcal{P}$ is a collection of disjoint non-empty sets that are  unions of members of $\mathcal{P}$.
Note that $\bigcup Q \subseteq \bigcup P$ if $\mathcal Q$ is a coarsening of~$\mathcal{P}$.

For a vertex~$v$ of a graph~$G$, we denote by~$N_G(v)$ the set of neighbors of~$v$ in~$G$ and define~$N_G[v]:=N_G(v)\cup\{v\}$.
For a set $X\subseteq V(G)$, let $N_G[X]:=\bigcup_{v\in X}N_G[v]$ and let $N_G(X):=N_G[X]\setminus X$.
We denote by ${G-X}$ the graph obtained from~$G$ by removing all vertices in~$X$ and all edges incident with vertices in~$X$.
If $X=\{v\}$, then we may write ${G-v}$ for ${G-X}$.
The \emph{subgraph of~$G$ induced on~$X$} is the graph ${G[X]:=G-(V(G)\setminus X)}$.
The \emph{complement of~$G$}, denoted by~$\overline{G}$, is the graph with vertex set $V(G)$ such that distinct vertices are adjacent in~$\overline{G}$ if and only if they are not adjacent in~$G$.
For graphs $G=(V,E)$ and $H=(V',E')$, the \emph{union} of~$G$ and~$H$ is the graph ${G\cup H:=(V\cup V',E\cup E')}$.
We call it the \emph{disjoint union} of~$G$ and~$H$ if~$V$ and~$V'$ are disjoint.

For a tree~$T$, we denote by~$L(T)$ the set of leaves of~$T$.
A \emph{clique} of a graph~$G$ is a set of pairwise adjacent vertices of~$G$, and an \emph{independent set} of~$G$ is a set of pairwise non-adjacent vertices of~$G$.
For sets ${X,X'\subseteq V(G)}$, an \emph{$(X,X')$-path} of~$G$ is a path from a vertex in~$X$ to a vertex in~$X'$ with no internal vertex in ${X\cup X'}$.
We call it an \emph{$X$-path} if ${X=X'}$.
For vertices $u,v$ of~$G$, the \emph{distance} between~$u$ and~$v$ in~$G$, denoted by $\dist_G(u,v)$, is the length of a shortest path between~$u$ and~$v$.
If~$G$ has no such path, then we define $\dist_G(u,v)$ as $+\infty$.
The \emph{radius} of a connected graph~$G$ is the minimum integer~$r$ such that~$G$ has a vertex which is at distance at most~$r$ from every vertex of~$G$.

For positive integers~$m$ and~$n$, we denote by~$K_n$ the complete graph on~$n$ vertices and by~$K_{m,n}$ the complete bipartite graph where one side has~$m$ vertices and the other side has~$n$ vertices.
We also denote by~$P_n$ a path on~$n$ vertices and by $mP_n$ the disjoint union of~$m$ copies of~$P_n$.
The vertex set of~$mP_n$ is $[m]\times[n]$ where $(i,j)$ is the $j$-th vertex of the $i$-th path~$P_n$.
For $i\in[m]$, the \emph{$i$-th row} of~$mP_n$ is the subpath of~$mP_n$ with vertex set $\{i\}\times[n]$.
For $j\in[n]$, the \emph{$j$-th column} of~$mP_n$ is the set ${\{(i,j):i\in[m]\}}$, that is the set of all $j$-th vertices of each component of $mP_n$.
The \emph{column set} of~$mP_n$ is the set of columns of~$mP_n$.

Let~$G$ and~$H$ be vertex-disjoint graphs on~$n$ vertices with vertex orderings ${\{u_1,u_2,\ldots,u_n\}}$ and ${\{v_1,v_2,\ldots,v_n\}}$, respectively.
We denote by $G\tri H$ the graph obtained from $G\cup H$ by adding edges between $V(G)$ and $V(H)$ such that for all $i,j\in[n]$, $u_i$ and~$v_j$ are adjacent in $G\tri H$ if and only if ${i\geq j}$.
We refer to \cref{fig:tri} for an example.

\subsection{Vertex-minors and pivot-minors}

For a vertex~$v$ of a graph~$G$, the graph obtained by applying \emph{local complementation} at~$v$ to~$G$ is
\[
    G\ast v:=(V(G),E(G)\triangle\{xy:x,y\in N_G(v),~x\neq y\}).
\]
For an edge~$uv$ of~$G$, the graph obtained by \emph{pivoting~$uv$} is $G\wedge uv:=G\ast u\ast v\ast u$, which is well defined as $G\ast u\ast v\ast u=G\ast v\ast u\ast v$.
This can be also seen by the following lemma, which yields an equivalent definition of pivoting.
For a graph~$H$ and sets $X,Y\subseteq V(H)$, we denote by $H\ast(X,Y)$ the graph $(V(H),E(H)\triangle\{x,y:x\in X,~y\in Y,~x\neq y\})$.

\begin{lemma}[Oum~\cite{Oum2004}]\label{lem:Oum2004}
    Let~$uv$ be an edge of a graph~$G$.
    Let
    \begin{align*}
      C&:=N_G(u)\cap N_G(v),\\
      L&:=N_G(u)\setminus N_G[v],\\
      R&:=N_G(v)\setminus N_G[u].
    \end{align*}
    Then $G\wedge uv$ is identical to the graph obtained from $G\ast(C,L)\ast(L,R)\ast(R,C)$ by exchanging~$u$ and~$v$ (so that~$u$ is adjacent to $C\cup R\cup \{v\}$ and~$v$ is adjacent to $C\cup L\cup \{u\}$).
\end{lemma}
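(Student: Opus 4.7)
The plan is to unfold the definition $G\wedge uv = G\ast u\ast v\ast u$ and directly compare, edge by edge, with the graph $G\ast(C,L)\ast(L,R)\ast(R,C)$ after the swap of $u$ and $v$. I would partition $V(G)\setminus\{u,v\}$ into the four classes $C$, $L$, $R$, and $S:=V(G)\setminus(N_G[u]\cup N_G[v])$, since the adjacency of a vertex to $u$ and to $v$ determines which local complementations will touch it.

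First I would compute the neighborhoods of $u$ and $v$ at each intermediate stage of $\ast u\ast v\ast u$. Starting from $N_G(u)=C\cup L\cup\{v\}$ and $N_G(v)=C\cup R\cup\{u\}$: applying $\ast u$ leaves $N(u)$ unchanged and sends $v$'s neighborhood to $(C\cup R\cup\{u\})\triangle(C\cup L)=L\cup R\cup\{u\}$; applying $\ast v$ then leaves $N(v)$ unchanged and sends $u$'s neighborhood to $(C\cup L\cup\{v\})\triangle(L\cup R)=C\cup R\cup\{v\}$; the final $\ast u$ leaves $u$'s neighborhood untouched and sends $v$'s to $(L\cup R\cup\{u\})\triangle(C\cup R)=C\cup L\cup\{u\}$. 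Thus in $G\wedge uv$, the vertex $u$ is adjacent to $C\cup R\cup\{v\}$ and $v$ is adjacent to $C\cup L\cup\{u\}$, matching exactly the neighborhoods after swapping $u$ and $v$ in $G\ast(C,L)\ast(L,R)\ast(R,C)$.

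Next I would track, for each pair $\{x,y\}\subseteq V(G)\setminus\{u,v\}$, the parity with which its adjacency is toggled across the three local complementations. Pairs with at least one endpoint in $S$ are never toggled, since $S$ is disjoint from each of the neighborhoods $C\cup L$, $L\cup R$, and $C\cup R$ that are used by $\ast u$, $\ast v$, and the final $\ast u$ respectively; this agrees with $\ast(C,L)\ast(L,R)\ast(R,C)$, which also leaves such pairs alone. For pairs with both endpoints in $C\cup L\cup R$, a short case check on which of $C\cup L$, $L\cup R$, $C\cup R$ contains both endpoints shows that pairs inside a single one of $C$, $L$, $R$ are toggled exactly twice (net zero), while a pair with endpoints in two distinct classes among $\{C,L\},\{L,R\},\{R,C\}$ is toggled exactly once. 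This is precisely the effect of $\ast(C,L)\ast(L,R)\ast(R,C)$.

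Combining the neighborhood computation and the pair-by-pair parity check yields the stated identity. The only subtle point, and the main place to be careful, is bookkeeping the swap: in $G\ast(C,L)\ast(L,R)\ast(R,C)$ the vertices $u$ and $v$ keep their original neighborhoods $C\cup L\cup\{v\}$ and $C\cup R\cup\{u\}$ (since none of the three toggles touches $u$ or $v$), so after exchanging the labels $u$ and $v$ one indeed obtains $u$ adjacent to $C\cup R\cup\{v\}$ and $v$ adjacent to $C\cup L\cup\{u\}$, as parenthetically asserted. No deeper structural argument is needed; the lemma reduces to this symmetric-difference calculation.
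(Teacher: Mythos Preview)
Your computation is correct. Note, however, that the paper does not give its own proof of this lemma: it is stated with attribution to Oum~\cite{Oum2004} and used as a black box. Your approach---partitioning $V(G)\setminus\{u,v\}$ into $C$, $L$, $R$, $S$, tracking the evolving neighborhoods of $u$ and $v$ through $\ast u\ast v\ast u$, and then counting toggle parities for each unordered pair---is the standard direct verification, and all the case checks you describe go through exactly as you say.
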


Thus, pivoting $uv$ is the operation of complementing the edge relation between 
\begin{itemize}
    \item the set of common neighbors of~$u$ and~$v$,
    \item the set of neighbors of~$u$ that are not adjacent nor equal to~$v$, and
    \item the set of neighbors of~$v$ that are not adjacent nor equal to~$u$,
\end{itemize}
and exchanging~$u$ and~$v$.
The following lemma immediately follows from \cref{lem:Oum2004}.

\begin{lemma}\label{lem:shortening}
    Let~$u$ be a vertex of a graph~$G$ having exactly two neighbors~$v$ and~$w$.
    Then\
    \[
        E(G\wedge uv-\{u,v\})=E(G-\{u,v,w\})\cup\{xw:x\in (N_G(v)\triangle N_G(w))\setminus\{v,w\}\}.
    \]
\end{lemma}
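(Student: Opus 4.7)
The plan is to apply \cref{lem:Oum2004} directly to the edge $uv$ and compute the three sets $C$, $L$, $R$ using the hypothesis $N_G(u)=\{v,w\}$. Since $N_G(u)=\{v,w\}$, we have $C=N_G(u)\cap N_G(v)\subseteq\{w\}$ and $L=N_G(u)\setminus N_G[v]\subseteq\{w\}$, with the singleton $\{w\}$ going to $C$ or to $L$ according to whether $vw\in E(G)$; meanwhile $R=N_G(v)\setminus N_G[u]=N_G(v)\setminus\{u,v,w\}$.

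In either case, among the three toggle operations $\ast(C,L)$, $\ast(L,R)$, $\ast(R,C)$ two involve an empty set and therefore do nothing, while the remaining one complements exactly the edge relation between $\{w\}$ and $R$. So $G\ast(C,L)\ast(L,R)\ast(R,C)$ differs from $G$ only on edges of the form $wx$ with $x\in R$, and for such an $x$ the edge $wx$ is present in the modified graph iff exactly one of ``$wx\in E(G)$'' and ``$x\in R$'' holds. Exchanging $u$ and $v$ affects no edge avoiding $\{u,v\}$, so after deleting $u$ and $v$ the surviving edges split into two kinds: edges disjoint from $w$, which are unaffected and therefore contribute exactly $E(G-\{u,v,w\})$; and edges incident to $w$, which are precisely those $wx$ with $x\in (N_G(w)\triangle R)\setminus\{u,v,w\}$.

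Finally I would verify the set identity $(N_G(w)\triangle R)\setminus\{u,v,w\}=(N_G(v)\triangle N_G(w))\setminus\{v,w\}$. For $x\in V(G)\setminus\{u,v,w\}$, the definition of $R$ gives $x\in R$ iff $x\in N_G(v)$, so $x\in N_G(w)\triangle R$ iff $x\in N_G(w)\triangle N_G(v)$; and one checks that $u\in N_G(v)\cap N_G(w)$ (so $u\notin N_G(v)\triangle N_G(w)$) while neither $v$ nor $w$ can appear as an endpoint of an edge incident to $w$ in a simple graph. The only real obstacle is keeping the case split $vw\in E(G)$ versus $vw\notin E(G)$ tidy, but it disappears the moment one observes that in both cases the same set of edges, those between $w$ and $R$, ends up being toggled.
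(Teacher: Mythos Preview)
Your proposal is correct and is exactly the argument the paper has in mind: the paper simply states that \cref{lem:shortening} ``immediately follows from \cref{lem:Oum2004}'', and you have unpacked that claim by computing $C$, $L$, $R$ under the hypothesis $N_G(u)=\{v,w\}$ and observing that only the pairs between $\{w\}$ and $R$ get toggled. The final set identity and the handling of $u$ via $u\in N_G(v)\cap N_G(w)$ are right; the phrasing around ``for such an $x$'' is slightly loose (you really want the XOR description for all $x\notin\{u,v,w\}$, not just $x\in R$), but the subsequent computation uses the correct general form, so no harm is done.
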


A \emph{vertex-minor} of~$G$ is a graph obtained from~$G$ by applying a sequence of vertex deletions and local complementations to~$G$.
Similarly, a \emph{pivot-minor} of~$G$ is a graph obtained from~$G$ by applying a sequence of vertex deletions and pivotings to~$G$.
Note that every pivot-minor is a vertex-minor.

It is shown that $K_t\tri K_t$ has no path on five vertices as a pivot-minor.

\begin{lemma}[Kwon, McCarty, Oum, and Wollan~\cite{KMOW2019}]
    For a positive integer~$t$, $K_t\tri K_t$ has no pivot-minor isomorphic to~$P_5$.
\end{lemma}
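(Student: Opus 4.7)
The plan is to prove the stronger statement that every pivot-minor of $K_t\tri K_t$ contains no induced $P_5$; since $P_5$ itself contains an induced $P_5$, this would imply the lemma. I would start with the base case, that $K_t\tri K_t$ contains no induced $P_5$, which is quick: the complement $\overline{K_t\tri K_t}$ is the half graph $H_t$, bipartite with parts $\{u_1,\ldots,u_t\}$ and $\{v_1,\ldots,v_t\}$ and edges $u_iv_j$ whenever $i<j$. Bipartiteness forbids odd cycles, so $H_t$ has no induced $\overline{P_5}$, because $\overline{P_5}$ contains a $5$-cycle (through the vertices in the cyclic order $1,3,5,2,4$). Taking complements, $K_t\tri K_t$ contains no induced $P_5$.

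Next, I would proceed by induction on the length of a sequence of pivots and vertex deletions producing a pivot-minor of $K_t\tri K_t$. Vertex deletion preserves ``no induced $P_5$''. For pivoting an edge $e=xy$ of a graph $G$ in the orbit, \cref{lem:Oum2004} describes $G\wedge e$ as obtained from $G$ by three edge-flipping operations $\ast(C,L)$, $\ast(L,R)$, $\ast(R,C)$ together with an exchange of $x$ and $y$, where $C=N_G(x)\cap N_G(y)$, $L=N_G(x)\setminus N_G[y]$, and $R=N_G(y)\setminus N_G[x]$. I would then case-split on how the five vertices of a hypothetical induced $P_5$ in $G\wedge e$ are distributed across $\{x,y\}\cup C\cup L\cup R$ and the remaining vertices, translating each case back into an adjacency configuration in $G$ that must be excluded.

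The main obstacle is that ``no induced $P_5$'' is not preserved by pivoting in general; for example, pivoting an edge of the $P_5$-free graph $K_{2,2,2}$ can yield graphs with longer induced paths. Hence the inductive invariant must be strengthened to reflect structure specific to pivot-minors of $K_t\tri K_t$, such as a distinguished linear ordering of vertices together with a controlled ``staircase'' pattern of adjacencies inherited from the half-graph structure of $H_t$. Establishing that such an enriched invariant is preserved by each pivot and is incompatible with an induced $P_5$ would constitute the technical core of the argument.
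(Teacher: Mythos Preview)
This lemma is not proved in the present paper; it is simply quoted from~\cite{KMOW2019} (and used only to explain why $K_t\tri K_t$ must appear alongside $P_t$ in \cref{thm:main}). There is therefore no in-paper argument to compare your proposal against.

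On the merits of your proposal itself: the base case is fine---$\overline{K_t\tri K_t}$ is indeed bipartite and $\overline{P_5}$ contains an odd cycle, so $K_t\tri K_t$ is $P_5$-free. But the inductive step is not a proof. You correctly observe that ``no induced $P_5$'' is not preserved under pivoting, and you then say that one would need a strengthened, pivot-stable invariant reflecting the half-graph structure---yet you never actually supply one, let alone verify that it is closed under pivoting and incompatible with an induced~$P_5$. That is precisely the content of the lemma, so as written your proposal stops exactly where the real work begins. To turn this into a proof you must exhibit a concrete pivot-closed class of graphs containing every $K_t\tri K_t$ and show that no member of that class has an induced~$P_5$; everything preceding that step is preamble.
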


\subsection{Shrub-depth}

The shrub-depth is a dense analog of tree-depth, introduced by Ganian, Hlin\v{e}n\'{y}, Ne{\v{s}}et{\v{r}}il, Obdr\v{z}\'{a}lek, Ossona de Mendez, and Ramadurai~\cite{GHNOOR2012}.

Let $d\geq0$ and $m\geq1$ be integers.
A \emph{$(d,m)$-tree-model} of a graph~$G$ is a triple $(T,S,\lambda)$ of a tree~$T$ rooted at a node~$r$, a set $S\subseteq[m]^2\times[d]$, and a function $\lambda:L(T)\to[m]$ such that
\begin{itemize}
    \item for each $v\in L(T)$, $\dist_T(r,v)=d$,
    \item $V(G)$ is identical to~$L(T)$,
    \item two vertices $u$ and $v$ of $G$ are adjacent in~$G$ if and only if $(\lambda(u),\lambda(v),\frac{1}{2}\dist_T(u,v))\in S$.
\end{itemize}
We remark that any two vertices of~$G$ have even distance in~$T$ by the first condition.
Note that every graph~$G$ on~$n$ vertices admits a $(1,n)$-tree-model.
In addition, $G$ admits a $(0,m)$-tree-model for some ${m\geq0}$ if and only if ${n\leq1}$.
It is shown that if a graph~$G$ admits a $(d,m)$-tree-model, then every induced subgraph of~$G$ admits a $(d,m)$-tree-model~\cite{GHNOO2017}.

The \emph{shrub-depth} of a graph class~$\mathcal{C}$ is the smallest integer~$d$ for which there exists an integer~$m$ such that every graph in~$\mathcal{C}$ admits a $(d,m)$-tree-model.
We remark that if~$\mathcal{C}$ contains finitely many graphs, then it has shrub-depth at most~$1$ because~$m$ can be chosen as $\max_{G\in\mathcal{C}}\abs{V(G)}$.

Ganian et al.~\cite{GHNOO2017} provided the relationship between shrub-depth and the two parameters, tree-depth and linear rank-width.

\begin{proposition}[Ganian, Hlin\v{e}n\'{y}, Ne{\v{s}}et{\v{r}}il, Obdr\v{z}\'{a}lek, and Ossona de Mendez~{\cite[Proposition~3.4]{GHNOO2017}}]
    Let~$\mathcal{C}$ be a graph class and let $d\geq0$ be an integer.
    Then the following hold.
    \begin{enumerate}[label=(\alph*)]
        \item\label{item1} If~$\mathcal{C}$ is of tree-depth at most~$d$, then it is of shrub-depth at most~$d$.
        \item\label{item2} If~$\mathcal{C}$ has bounded shrub-depth, then it has bounded linear rank-width.
    \end{enumerate}
    The class of all complete graphs is a counterexample to the converse of~\ref{item1}, and the class of all paths is a counterexample to the converse of~\ref{item2}.
\end{proposition}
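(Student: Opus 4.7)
My plan is to prove the two implications by direct constructions, and then briefly verify the counterexamples.

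For part~(a), let $G$ have tree-depth at most $d$, witnessed by an elimination forest $F$ whose every vertex lies within $d-1$ edges of a root. Join all roots of $F$ to a new super-root $r$ to form a rooted tree $T_0$, so that every $v \in V(G)$ sits at some depth $\delta(v) \in \{1, \ldots, d\}$ from $r$. For each $v$, attach a pendant path of length $d - \delta(v)$ to $v$ ending in a fresh leaf $v^{\ast}$; call the resulting tree $T$. A short computation gives $\dist_T(u^{\ast}, v^{\ast}) = 2(d - \ell)$, where $\ell$ is the $T_0$-depth of the least common ancestor of $u$ and $v$. Set $\lambda(v^{\ast}) := (\delta(v), A_v)$, where $A_v \subseteq \{1, \ldots, \delta(v) - 1\}$ records which ancestor-depths of $v$ in $T_0$ host a neighbor of $v$ in $G$; this uses at most $m = 2^d - 1$ labels, a bound depending only on $d$. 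Finally, define $S$ so that $(\lambda(u^{\ast}), \lambda(v^{\ast}), k) \in S$ holds iff, setting $\ell := d - k$, one of $\delta(u), \delta(v)$ equals $\ell$ and the other vertex's ancestor-set contains $\ell$. The triple $(T, S, \lambda)$ is then a $(d, m)$-tree-model of $G$.

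For part~(b), I would prove by induction on $d$ that every graph admitting a $(d, m)$-tree-model has linear rank-width at most $f(d, m) = O(dm)$. The base case $d = 0$ is immediate since $|V(G)| \le 1$. For the inductive step, let $(T, S, \lambda)$ be such a tree-model, let $c_1, \ldots, c_k$ be the children of the root, and let $L_i$ denote the leaves of the subtree rooted at $c_i$; the restriction is a $(d-1, m)$-tree-model of $G[L_i]$, so by induction each $G[L_i]$ has a linear layout $\sigma_i$ of cut-rank at most $f(d-1, m)$. Take $\sigma := \sigma_1 \sigma_2 \cdots \sigma_k$. The key observation is that for $u \in L_a$ and $v \in L_b$ with $a \ne b$ we have $\tfrac{1}{2}\dist_T(u, v) = d$, so the adjacency between $L_a$ and $L_b$ depends only on $(\lambda(u), \lambda(v))$; hence any bipartite graph between two disjoint unions of some $L_i$'s has cut-rank at most $m$. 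Writing a cut of $\sigma$ that falls inside some $L_i$ as a $2 \times 2$ block matrix across the boundary of $L_i$, three blocks are cross-subtree blow-ups of rank $\le m$ and the fourth is the inherited cut inside $G[L_i]$ of rank $\le f(d-1, m)$, giving $f(d, m) \le f(d-1, m) + 3m$.

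The class of complete graphs refutes the converse of (a): $K_n$ has tree-depth $n$ but admits a $(1, 1)$-tree-model where all leaves carry label $1$ and $S = \{(1, 1, 1)\}$. The class of paths has linear rank-width $1$; that its shrub-depth is unbounded can be proved by a Ramsey-style argument showing that for a sufficiently long path, any fixed $(d, m)$-tree-model forces two leaves whose labels and tree-distance are inconsistent with the adjacency pattern along the path. I expect the main technical point to be part~(a), namely bounding the label-set in terms of $d$ alone; encoding the ancestor-adjacency pattern as a subset of $\{1, \ldots, d\}$ is what makes this uniform. In part~(b) the awkward issue is the possibly unbounded branching at the root, handled by the blow-up observation that every cross-subtree bipartite graph has cut-rank at most $m$.
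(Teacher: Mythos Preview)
The paper does not prove this proposition; it is quoted in the preliminaries with attribution to \cite{GHNOO2017} and no argument is supplied, so there is nothing in the paper to compare your attempt against.

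On its own merits your sketch is essentially the standard argument and is sound. Two minor points are worth flagging. In part~(a), when the elimination forest $F$ happens to be a single tree, your super-root $r$ has degree~$1$ in $T$ and hence is a leaf at depth~$0$, violating both $L(T)=V(G)$ and the uniform-depth condition; this evaporates under the usual convention that the root of a rooted tree is never counted as a leaf, or one can simply omit $r$ in that case and shift depths. For the last counterexample, the fact that the class of paths has unbounded shrub-depth is not completely trivial and does need an argument; your pointer to a Ramsey-type proof is in the right spirit, and a full proof is in the cited source. Your part~(b), concatenating linear layouts of the subtrees and bounding every cross-subtree block by rank~$m$ via the label blow-up observation, is exactly the intended argument.
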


\subsection{Rank-depth}

The rank-depth of a graph was first introduced by DeVos, Kwon, and Oum~\cite{DKO2019}.
To define rank-depth, we use the following definitions.

For a graph~$G$ and a set $S\subseteq V(G)$, let $\rho_G(S)$ be the rank of $S\times(V(G)\setminus S)$ submatrix of the adjacency matrix of~$G$.
A \emph{decomposition} of~$G$ is a pair $(T,\sigma)$ of a tree~$T$ and a bijection~$\sigma$ from $V(G)$ to $L(T)$.
For each non-leaf node~$v$ of~$T$, let~$\mathcal{P}_v$ be a partition $\{\sigma^{-1}(V(C)\cap L(T)):\text{$C$ is a component of $T-v$}\}$ of~$V(G)$. 
The \emph{width} of~$v$ is
\[
    \max_{\mathcal{P}'\subseteq\mathcal{P}_v}\rho_G\left(\bigcup_{X\in\mathcal{P}'}X\right).
\]
The \emph{width} of a decomposition $(T,\sigma)$ of~$G$ is the maximum width of a non-leaf node of~$T$.

For integers $k,r\geq0$, a \emph{$(k,r)$-decomposition} of~$G$ is a decomposition $(T,\sigma)$ of~$G$ of width at most~$k$ such that the radius of~$T$ is at most~$r$.
Finally, the \emph{rank-depth} of~$G$ is the minimum integer~$k$ such that~$G$ admits a $(k,k)$-decomposition.
If~$G$ has at most one vertex, then its rank-depth is defined by~$0$.
Otherwise, the rank-depth of~$G$ is at least~$1$.

DeVos, Kwon, and Oum~\cite{DKO2019} showed that if $H$ is a vertex-minor of~$G$, then the rank-depth of~$H$ is less than or equal to the rank-depth of ~$G$.
They also showed that rank-depth is functionally equivalent to shrub-depth.

\begin{theorem}[DeVos, Kwon, and Oum~\cite{DKO2019}]
    A class of graphs has bounded rank-depth if and only if it has bounded shrub-depth.
\end{theorem}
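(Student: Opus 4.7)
The theorem is a functional equivalence between two parameters, so the plan is to prove each direction by an explicit construction, converting between $(d,m)$-tree-models and decompositions of bounded width and radius.

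For the forward direction (bounded shrub-depth implies bounded rank-depth), I would take a $(d,m)$-tree-model $(T,S,\lambda)$ of a graph $G$ and reuse $T$ itself, together with the bijection $\sigma\colon V(G)\to L(T)$, as a decomposition of $G$. Its radius is at most $d$. To bound the width, fix a non-leaf node $v$ and a subset $\mathcal{P}'\subseteq\mathcal{P}_v$, and set $U:=\bigcup\mathcal{P}'$. For $u\in U$ and $w\in V(G)\setminus U$ lying in different components of $T-v$, every path between them in $T$ passes through $v$, so $\dist_T(u,w)=\dist_T(u,v)+\dist_T(v,w)$, and the adjacency of $u,w$ in $G$ is determined via $S$ by the triple $(\lambda(u),\lambda(w),\tfrac{1}{2}(\dist_T(u,v)+\dist_T(v,w)))$. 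Hence each row of the cut submatrix is determined by the pair $(\lambda(u),\dist_T(u,v))$, and since $\lambda(u)\in[m]$ and $\dist_T(u,v)\le 2d$, there are at most $m(2d+1)$ distinct rows. This bounds the width by $m(2d+1)$, so the rank-depth of $G$ is at most $\max\{d,m(2d+1)\}$.

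For the reverse direction (bounded rank-depth implies bounded shrub-depth), I would start from a $(k,k)$-decomposition $(T,\sigma)$ of $G$ and build a tree-model. First, modify $T$ by subdividing edges so that all leaves sit at a common depth $d$ bounded in terms of $k$, and binarize so that every non-leaf node has bounded degree, while keeping the width bounded by some $k'$ depending only on $k$. For each non-leaf node $v$ and each part $P\in\mathcal{P}_v$, the bipartite adjacency matrix between $P$ and $V(G)\setminus P$ has rank at most $k'$, so its rows and columns each split into at most $2^{k'}$ equivalence classes. I would let $\lambda(x)$ record, for every non-leaf ancestor $v$ of $\sigma(x)$, the row-class of $x$ in the cut $P_v(x)$-versus-rest, the column-class of $x$ in the sibling-child cut at $v$, and enough combinatorial information about the cut to identify the adjacency table between row- and column-classes. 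With bounded degree and bounded depth $d$, the total number of distinct labels is then bounded by a function of $k$; take $m$ to be this bound. Finally, let $S\subseteq[m]^2\times[d]$ contain exactly those triples $(\ell_1,\ell_2,e)$ for which the cut-data encoded in $\ell_1,\ell_2$ at depth $d-e$ are consistent and, via the encoded adjacency table, witness adjacency between two such vertices.

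The main obstacle is the reverse direction, specifically ensuring that $S$ is well-defined: for any two pairs $(x,y)$ and $(x',y')$ with matching labels and matching $\dist_T$-distance, they must agree on whether they are adjacent in $G$. This forces the label to store not just each vertex's own row-class but also structural data about the particular cut at its lowest common ancestor with the partner vertex, and the bounded degree of $T$ is what keeps this extra bookkeeping finite. A secondary technical issue is carrying out the subdivision and binarization so that both width and radius remain bounded functions of $k$, so that the final tree-model witnesses a shrub-depth bounded in terms of $k$ alone.
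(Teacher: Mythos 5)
This statement is quoted in the paper from DeVos, Kwon, and Oum without proof, so there is no internal argument to compare against; judging your proposal on its own terms, the forward direction is essentially correct: reusing the tree-model tree as the decomposition, the adjacency across any non-leaf node $v$ of a vertex $u$ on one side and $w$ on the other is determined by $(\lambda(u),\dist_T(u,v))$ and $(\lambda(w),\dist_T(v,w))$, so the cut matrix has at most $m(2d+1)$ distinct rows, giving width at most $m(2d+1)$ and radius at most $d$. That part is fine.

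The reverse direction, however, has a genuine gap, and it is the harder half of the theorem. Your plan requires binarizing the decomposition tree so that every non-leaf node has bounded degree while the depth stays bounded in terms of $k$; these two demands are incompatible. A $(k,k)$-decomposition may have a node with $\Omega(n)$ children (for instance a spider-shaped decomposition tree), and any tree of bounded degree with $n$ leaves has depth at least $\log_2 n$ up to a constant factor, so after binarization the depth $d$ of your tree-model depends on $\abs{V(G)}$ rather than on $k$, and the construction no longer witnesses bounded shrub-depth for the class. If instead you skip binarization to keep $d\leq k$, then your own accounting of labels breaks down: you explicitly invoke bounded degree to make the label set finite, because a vertex's label is meant to encode adjacency tables at its ancestors, and those tables are indexed by the (possibly unboundedly many) parts at a node. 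There is also a circularity in your proposed fix for the well-definedness of $S$: $\lambda(x)$ must be assigned to $x$ alone, so it cannot record ``structural data about the particular cut at its lowest common ancestor with the partner vertex,'' since that datum depends on the partner. A correct argument has to handle high-degree nodes without reducing the degree, exploiting that the width bound applies to the union of an \emph{arbitrary} subfamily of parts at a node (so the matrix between any such union and the rest has at most $2^k$ distinct rows over $\mathrm{GF}(2)$), which is what simultaneously bounds the number of external neighborhood types across all children; as written, your sketch does not produce a bounded $m$ and a bounded $d$ at the same time, so the direction from bounded rank-depth to bounded shrub-depth is not established.
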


\section{Flipped graphs}\label{sec:flipped graphs}

Let~$H$ be a graph and let~$\mathcal{P}$ be a collection of pairwise disjoint non-empty subsets of $V(H)$.
For each vertex~$v$ of~$H$, if~$v$ is contained in a member of~$\mathcal{P}$, then we denote the member by~$\mathcal{P}(v)$, and otherwise we let $\mathcal{P}(v):=\{v\}$.
A subset~$F$ of~$\mathcal{P}^2$ is \emph{symmetric} if for all $X,X'\in\mathcal{P}$, $(X,X')\in F$ implies $(X',X)\in F$.
For a symmetric ${F\subseteq\mathcal{P}^2}$, the \emph{$(\mathcal{P},F)$-flip} of~$H$, denoted by $H\oplus(\mathcal{P},F)$, is the graph with vertex set $V(H)$ such that distinct vertices~$u$ and~$v$ are adjacent in $H\oplus(\mathcal{P},F)$ if and only if either
\begin{itemize}
    \item $uv\notin E(H)$ and $(\mathcal{P}(u),\mathcal{P}(v))\in F$, or
    \item $uv\in E(H)$ and $(\mathcal{P}(u),\mathcal{P}(v))\notin F$.
\end{itemize}
A \emph{$\mathcal{P}$-flip} of~$H$ is a graph $H\oplus(\mathcal{P},F)$ for some symmetric $F\subseteq\mathcal{P}^2$.
If $\abs{\mathcal{P}}\leq k$, then we call it a \emph{$k$-flip} of~$H$.
For a non-empty set $X\subseteq V(H)$, the \emph{$X$-flip} of~$H$ is the graph $H\oplus(\{X\},\{(X,X)\})$.
For notational convenience, we define the $\emptyset$-flip of $H$ to be $H$ itself.
For positive integers~$m$ and~$n$, a \emph{flipped $mP_n$} is a $\mathcal{P}$-flip of $mP_n$ where~$\mathcal{P}$ is a coarsening of the column set of~$mP_n$.
If $\abs{\mathcal{P}}\leq k$, then we call it a \emph{$k$-flipped $mP_n$}.

M\"{a}hlmann~\cite{Mahlmann25} provided the list of all obstructions in terms of induced subgraphs for bounded rank-depth.

\begin{theorem}[M\"{a}hlmann~\cite{Mahlmann25}]\label{thm:Mahlmann25}
    There exists a function $g:\mathbb{N}\to\mathbb{N}$ such that for every $s\in\mathbb{N}$, every graph of rank-depth at least~$g(s)$ has an induced subgraph isomorphic to $K_s\tri K_s$, $K_s\tri\overline{K_s}$, $\overline{K_s}\tri\overline{K_s}$, or a flipped $sP_s$.
\end{theorem}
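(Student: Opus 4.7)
The plan is to prove the contrapositive: show that if $G$ contains no induced subgraph isomorphic to $K_s\tri K_s$, $K_s\tri\overline{K_s}$, $\overline{K_s}\tri\overline{K_s}$, or a flipped $sP_s$, then $G$ admits a $(k,r)$-decomposition with $k$ and $r$ bounded by a function of $s$. I would build such a decomposition greedily from a root, at each step trying to pick a \emph{splitter} --- a vertex, or a bounded set of vertices, whose removal separates the current piece into smaller pieces whose cuts to the outside have bounded rank. Recursing on the pieces yields both the tree and a width bound, and an upper bound on the radius follows once each failure of the greedy step is shown to yield one of the obstructions.

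To show that a good splitter always exists, suppose the procedure fails on some piece~$H$, and iterate failures to extract a long sequence $v_1,\ldots,v_N$ of vertices whose neighborhoods in $H$ keep having high rank even after removing earlier vertices. I would then apply multi-dimensional Ramsey arguments --- first on the adjacencies within $\{v_1,\ldots,v_N\}$, then on the adjacencies between the $v_i$ and a suitable coarsening $\mathcal{P}$ of the remaining vertex set --- to reduce to a large subsequence whose adjacency pattern between the $v_i$ and each part of~$\mathcal{P}$ is either constant or follows a simple monotone pattern across the subsequence.

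Finally, one classifies the monochromatic subsequence. A monotone ``staircase'' pattern between the $v_i$ and a single part of $\mathcal{P}$, combined with the internal densities on each side, produces $K_s\tri K_s$, $K_s\tri\overline{K_s}$, or $\overline{K_s}\tri\overline{K_s}$ after further Ramsey reductions depending on the density inside each side. A ``shift'' pattern, in which each $v_i$ is adjacent (up to the $\mathcal{P}$-flip) only to its column partner among the remaining vertices, produces a flipped $sP_s$ with column set inherited from $\mathcal{P}$. The main obstacle is the interface between the greedy step and the Ramsey reductions: each failure of the greedy procedure must yield a combinatorially clean sequence rather than merely a high-rank configuration, which likely requires the $\mathcal{P}$-flip formalism to be carried explicitly throughout the recursion so that intermediate re-flipping in sibling pieces does not destroy the obstruction that we are eventually building toward.
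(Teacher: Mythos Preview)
This theorem is not proved in the paper at all: it is quoted as an external result of M\"ahlmann (reference~\cite{Mahlmann25}) and used as a black box in the proof of \cref{thm:main}. There is therefore no ``paper's own proof'' to compare your proposal against.

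For context, your sketch is at best a plausible high-level outline of how a structural dichotomy of this type might be obtained, but it is far from a proof. The critical step---showing that repeated failure of a greedy splitter must produce a clean sequence that Ramsey-reduces to one of the four obstructions---is exactly where all the work lies, and your proposal does not explain why the ``staircase vs.\ shift'' dichotomy is exhaustive, nor how the $\mathcal{P}$-flip structure and the column alignment required for a flipped $sP_s$ would emerge from a generic high-rank configuration. If you want to actually prove this statement, you should consult M\"ahlmann's paper directly; the present paper does not, and need not, reproduce that argument.
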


To prove \cref{thm:main}, we are going to apply \cref{thm:Mahlmann25} for an integer~$s$ which is sufficiently larger than~$t$, and then from each of the outcomes, find~$P_t$ or $K_t\tri K_t$ as a pivot-minor.
For the first outcome of \cref{thm:Mahlmann25}, we are done as every induced subgraph is a pivot-minor.
In the remaining of this paper, we will find~$P_t$ as a pivot-minor from each of the other outcomes.

The following two lemmas handle the second and third outcomes of \cref{thm:Mahlmann25}, respectively.

\begin{lemma}[Hlin\v{e}n\'{y}, Kwon, Obdr\v{z}\'{a}lek, and Ordyniak~{\cite[Proposition 6.2]{HlinenyKJS2016}}]\label{lem:second}
    For every integer~$t\geq1$, $K_t\tri\overline{K_t}$ has a pivot-minor isomorphic to~$P_{t+1}$.
\end{lemma}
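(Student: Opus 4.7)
The plan is to prove this by induction on $t$. The base case $t=1$ is immediate because $K_1 \tri \overline{K_1}$ is already $P_2$. To make the induction go through smoothly, I would strengthen the hypothesis and claim that $K_t \tri \overline{K_t}$ has $P_{t+1}$ as a pivot-minor in which the pendant vertex $v_t$ of the original graph appears as an endpoint of the path, adjacent along the path to $u_t$. The extra information about the endpoint is what enables the new pendant to be attached in the correct place when passing from $t-1$ to $t$.

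For the inductive step, the goal is to exhibit a pivot-minor of $G_t := K_t \tri \overline{K_t}$ that is isomorphic to a copy of $K_{t-1} \tri \overline{K_{t-1}}$ together with an extra pendant vertex attached at the analog of $v_{t-1}$. The case $t=3$ is a useful guide: pivoting the edge $u_2 v_1$ and deleting $u_1, u_2$ yields $P_4$ on the vertex sequence $v_3, u_3, v_1, v_2$, because the pivot creates the new edge $v_1 v_2$ and removes the chord $u_3 v_2$, while the deletions strip away the remaining chords. For general $t$, I would try the analogous operation: pivot an edge such as $u_2 v_1$, tracking every toggled edge via \cref{lem:Oum2004}, and delete $u_1$. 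This should produce a graph in which a copy of $K_{t-1} \tri \overline{K_{t-1}}$ appears on suitably relabeled vertices together with the newly created vertex $v_2$ attached as a pendant at what plays the role of $v_{t-1}$; the strengthened inductive hypothesis would then reduce the smaller stair to $P_t$ ending at this pendant's neighbor, extending the $P_t$ to the desired $P_{t+1}$.

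The main obstacle is precisely this identification. The pivot has many side effects beyond creating the single useful edge $v_1 v_2$: it toggles edges both among the $v$-side and between the $u$- and $v$-sides, and also modifies the adjacencies of the remaining $u_i$ with the $v_j$. Consequently the graph obtained after the pivot-and-delete step is not literally $K_{t-1} \tri \overline{K_{t-1}}$ plus a pendant, and one must argue either that these extra edges are harmless modulo further pivots and deletions, or that the resulting structure admits its own inductive reduction. A further subtlety is that when the inductive hypothesis is applied inside the smaller stair, its pivots could in principle disturb the preserved pendant $v_t$ or the vertex at which the new pendant is attached; the strengthened hypothesis must therefore guarantee a reduction that avoids pivoting edges incident to the distinguished endpoint. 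Establishing that such a compatible reduction always exists in the general inductive step is the heart of the proof, and is where the real work lies.
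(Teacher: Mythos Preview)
The paper does not supply its own proof of this lemma; it is quoted from \cite[Proposition~6.2]{HlinenyKJS2016} and used as a black box. So there is nothing in the present paper to compare against, and your proposal must stand on its own.

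Your one-step reduction is in fact much cleaner than you fear. In $K_t\tri\overline{K_t}$ with $t\ge 2$, the pivot on $u_2v_1$ has $C=\{u_1,u_3,\dots,u_t\}$, $L=\{v_2\}$ and $R=\emptyset$, so the only toggled pairs are $u_iv_2$ with $i\ne 2$. After deleting $u_1$, the graph on $\{u_2,\dots,u_t,v_1,v_3,\dots,v_t\}$ is \emph{exactly} $K_{t-1}\tri\overline{K_{t-1}}$ under the relabelling $a_i=u_{i+1}$, $b_1=v_1$, $b_j=v_{j+1}$ for $j\ge2$, and the remaining vertex $v_2$ is a pendant attached at $b_1$. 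Your first ``main obstacle'' is therefore not an obstacle at all.

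The genuine gap is the one you flag second, sharpened by the fact that the pendant sits at $b_1$, not at $b_{t-1}$ as you guessed. Your strengthened hypothesis tracks the endpoint $v_t$, which in the smaller copy corresponds to $b_{t-1}$, so it gives no control at $b_1$ where the pendant actually hangs. Worse, the recursive step pivots $a_2b_1$, an edge incident to $b_1$; the pendant $v_2$ then lies in $R$ and acquires edges to every vertex of $C\cup L$. Carrying the recipe through both levels for $t=4$ and deleting $a_1,a_2$ leaves a $5$-vertex graph containing a $4$-cycle, not $P_5$. To close the induction you would need a different strengthening---for instance one guaranteeing a pivot sequence that never uses an edge incident to $b_1$, so the pendant is preserved---or else a direct, non-inductive pivoting scheme. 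As written, the proposal locates the correct one-step reduction but does not complete the argument.
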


\begin{lemma}[Kwon and Oum~{\cite[Lemma 2.8(1)]{KO2014}}]\label{lem:third}
    For every integer~$t\geq1$, $\overline{K_t}\tri\overline{K_t}$ has a pivot-minor isomorphic to~$P_{2t}$.
\end{lemma}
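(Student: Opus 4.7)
The plan is to construct $P_{2t}$ explicitly as a pivot-minor of $\overline{K_t}\tri\overline{K_t}$ via the sequence of pivots $u_2v_2, u_3v_3, \ldots, u_{t-1}v_{t-1}$, performed in this order. The cases $t\in\{1,2\}$ are immediate, since $\overline{K_t}\tri\overline{K_t}$ is already isomorphic to $P_{2t}$ (for $t=2$, the path is $u_1v_1u_2v_2$).

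For $t\geq 3$, set $H_0:=\overline{K_t}\tri\overline{K_t}$ and define $H_k:=H_{k-1}\wedge u_{k+1}v_{k+1}$ for $k\in\{1,\ldots,t-2\}$. The main claim, proved by induction on $k$, is that for each such $k$, the edge set of $H_k$ is exactly the union of
\begin{enumerate}[label=\rm(\roman*)]
    \item the edges of a path with consecutive vertices $u_1,v_1,v_2,u_2,v_3,u_3,\ldots,v_{k+1},u_{k+1}$,
    \item the ``hub'' edges $u_{k+1}u_i$ for $i\in\{k+2,\ldots,t\}$, and
    \item the edges of a copy of $\overline{K_{t-k-1}}\tri\overline{K_{t-k-1}}$ induced on $\{u_{k+2},\ldots,u_t\}\cup\{v_{k+2},\ldots,v_t\}$ under the inherited ordering.
\end{enumerate}
Taking $k=t-2$, the hub degenerates to the single edge $u_{t-1}u_t$ and the smaller half graph to the single edge $u_tv_t$; these concatenate with the path to yield $P_{2t}$ with vertex sequence $u_1,v_1,v_2,u_2,\ldots,v_{t-1},u_{t-1},u_t,v_t$.

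For the inductive step, observe that in $H_k$ the vertex $u_{k+2}$ has exactly two neighbors, namely $u_{k+1}$ (via the hub) and $v_{k+2}$ (since $u_{k+2}$ is the smallest-index $u$-vertex in the smaller half graph), while $N_{H_k}(v_{k+2})=\{u_{k+2},u_{k+3},\ldots,u_t\}$. Applying \cref{lem:Oum2004} with $C=\emptyset$, $L=\{u_{k+1}\}$, and $R=\{u_{k+3},\ldots,u_t\}$, the pivot $u_{k+2}v_{k+2}$ toggles only the edges between $u_{k+1}$ and each $u_i$ with $i\geq k+3$, thereby dismantling those hub edges, and exchanges the neighborhoods of $u_{k+2}$ and $v_{k+2}$. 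Tracking the exchange shows that $u_{k+2}$ becomes the new hub for $\{u_{k+3},\ldots,u_t\}$, that the path is extended by the two new edges $u_{k+1}v_{k+2}$ and $v_{k+2}u_{k+2}$, and that the smaller half graph shrinks by one index on each side; this is precisely the invariant at step $k+1$. The base case $k=1$ is verified by the same analysis applied directly to $H_0$, where the role of the hub is played by $v_1$, which is adjacent to all of $u_1,u_2,\ldots,u_t$ and whose removal from the hub during the first pivot leaves exactly the path prefix $u_1v_1v_2u_2$. No genuine combinatorial obstacle arises; the only work is to track edge toggles carefully through the pivot formula at each stage.
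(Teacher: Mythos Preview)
Your proposal is correct. The pivot sequence $u_2v_2,\ldots,u_{t-1}v_{t-1}$ does exactly what you claim: I verified the base case $H_1$ and the inductive step directly from \cref{lem:Oum2004}, and the invariant (path prefix, hub at $u_{k+1}$, residual half graph on indices $\ge k+2$) propagates cleanly. At $k=t-2$ the structure collapses to the path $u_1,v_1,v_2,u_2,\ldots,v_{t-1},u_{t-1},u_t,v_t$, so $H_{t-2}\cong P_{2t}$ without any vertex deletions.

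There is nothing to compare against in this paper: \cref{lem:third} is quoted without proof from Kwon and Oum~\cite[Lemma~2.8(1)]{KO2014}. Your explicit construction is in the same spirit as that reference, which also produces $P_{2t}$ by an inductive pivot argument on the half graph. One small presentational point: your description of the base case (``the role of the hub is played by $v_1$'') is slightly loose, since the invariant as stated does not literally hold at $k=0$; it would be cleaner to simply compute $H_1$ directly (with $L=\{v_1\}$ rather than $\{u_{k+1}\}$) and then start the induction from there, which is effectively what you do anyway.
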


It remains to find~$P_t$ as a pivot-minor from a flipped $sP_s$.
We do this with the following two steps: we first show that every flipped $(4n-3)P_n$ has a $1$-flip of~$P_n$ as a pivot-minor, and then show that if~$n$ is large enough, then every $1$-flip of~$P_n$ has~$P_t$ as a pivot-minor.

In the remaining of this section, we will prove the following proposition for the first step.

\begin{proposition}\label{prop:preprocess}
    For every integer $n\geq1$,
    every flipped $(4n-3)P_n$ has a pivot-minor isomorphic to a $1$-flip of~$P_n$.
\end{proposition}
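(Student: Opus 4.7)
The plan is to prove a strengthened statement by induction on $k$: for every $n \ge 1$ and every flipped $mP_n$ whose column coarsening $\mathcal{P}$ satisfies $|\mathcal{P}| \le k$ with $m \ge 4k - 3$, there is a pivot-minor isomorphic to a $1$-flip of $P_n$. The proposition follows from the case $k = n$, since any coarsening of the column set of $mP_n$ has at most $n$ parts.

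The base case $k = 1$ is immediate. With a single part covering all columns, $F$ is either empty or the singleton containing only the self-pair, and the induced subgraph on any single row is $P_n$ or $\overline{P_n}$ respectively; both of these are $1$-flips of $P_n$, with flipping sets $\emptyset$ and $V(P_n)$.

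For the inductive step $k \ge 2$, the aim is to find a pivot-minor that is itself a flipped $(m - 4)P_n$ whose column coarsening has $k - 1$ parts; since $m - 4 \ge 4(k-1) - 3$, induction then yields the desired $1$-flip pivot-minor. Concretely, I would pick two parts $X, Y \in \mathcal{P}$ together with four auxiliary rows and perform a sequence of pivots on edges with endpoints in those rows, so that after deleting the vertices in those rows, $X$ and $Y$ merge into a single part $X \cup Y$. This merge is valid precisely when the self-flip indicators on $X$ and $Y$ and the between-flip indicator on $(X, Y)$ all agree, so that one can read the remaining graph as a flipped $(m-4)P_n$ with coarsening $(\mathcal{P} \setminus \{X, Y\}) \cup \{X \cup Y\}$. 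Using \cref{lem:Oum2004}, a single pivot on a well-chosen edge in the auxiliary rows toggles exactly one of these three indicators while leaving the rest of the flip structure intact, and therefore four auxiliary rows should suffice to realign any of the $2^3 = 8$ initial configurations.

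The main obstacle is the case analysis and the verification that the pivots preserve the flipped-$mP_n$ structure on the remaining rows. For each configuration of the three indicators, I would exhibit a concrete sequence of pivots on auxiliary-row edges, and then use \cref{lem:Oum2004} to compute, for each such pivot, the common-neighbor set $C$ and the side-sets $L$, $R$; the crucial check is that $C$, $L$, $R$ intersect the remaining rows only inside columns of $X \cup Y$, ensuring that the adjacency changes among the remaining rows are precisely those consistent with the merged coarsening. This can be ensured by placing the endpoints of each pivot edge in specific columns of $X$ or $Y$, so that the common neighbors of those endpoints across the remaining rows all lie within $X \cup Y$; the hardest sub-case should be the one in which the three indicators disagree in a maximal way, forcing use of all four auxiliary rows, while the other sub-cases require only deletions or a single pivot.
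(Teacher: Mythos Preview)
Your inductive framework matches the paper's, but the core technical claim fails. You assert that a single pivot on a well-chosen auxiliary-row edge ``toggles exactly one of these three indicators while leaving the rest of the flip structure intact,'' and that for such a pivot the sets $C$, $L$, $R$ from \cref{lem:Oum2004} meet the remaining rows only inside $X\cup Y$. This is not true in general. If $u\in X$ and $v\in Y$ lie in distinct auxiliary rows, then on the remaining rows $u$ is adjacent exactly to the columns in parts $Z$ with $(X,Z)\in F$, and $v$ to those with $(Y,Z)\in F$. Hence any third part $Z$ with $(X,Z)\in F$ or $(Y,Z)\in F$ contributes vertices to $C\cup L\cup R$, and the pivot changes cross-relations among \emph{all} such parts, not just the three indicators on $\{X,Y\}$. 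There is no placement of the edge that avoids this once $F$ relates $X$ or $Y$ to other parts.

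The paper confronts exactly this non-locality. Its \cref{lem:pivoting} records that pivoting $uv$ with $u\in X$, $v\in X'$ replaces $F$ by $F\triangle\mathcal D_F(X,X')$, which in general alters many pairs. The reduction in $|\mathcal P|$ is then obtained not by aligning three indicators, but by exploiting two structural cases: if some $(X,X)\notin F$, one pivot makes $X$ disappear from the flip relation entirely (\cref{lem:case1}); if $(X,X),(X',X')\in F$ and $(X,X')\notin F$ with a $\bigcup\mathcal P$-path between them, a pivot makes $X$ and $X'$ symmetric in $F\triangle\mathcal D_F(X,X')$ so they merge (\cref{lem:case2}). When neither applies, one preparatory pivot creates the hypothesis of \cref{lem:case2} (\cref{cor:cases}), costing four rows in total per step. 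Your proposal would need to replace the ``toggle one indicator'' mechanism with an argument of this kind that controls the global effect of each pivot.
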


To prove \cref{prop:preprocess}, we will use the following notations and lemmas.
Let~$G$ be the $(\mathcal{P},F)$-flip of a graph~$H$ for a collection~$\mathcal P$ of pairwise disjoint non-empty subsets of~$V(H)$ and a symmetric ${F\subseteq\mathcal{P}^2}$.
For a subgraph~$H'$ of~$H$ and each set $X\in\mathcal{P}$, let
\begin{align*}
    X|_{H'}&:=X\cap V(H'),\\
    \mathcal{P}|_{H'}&:=\{X|_{H'}:X\in\mathcal{P},~X|_{H'}\neq\emptyset\},\\
    F|_{H'}&:=\{(X|_{H'},X'|_{H'}):(X,X')\in F,~X|_{H'}\neq\emptyset\neq X'|_{H'}\}.
\end{align*}
For convenience, if $H=mP_n$ for some $m,n\ge1$, then for each~$\ell\in [m]$, we write $X|_\ell$, $\mathcal P|_{\ell}$, and $F|_\ell$ for~$X|_{H'}$, $\mathcal{P}|_{H'}$, and~$F|_{H'}$ where $H'$ is the union of all $i$-th rows of $mP_n$ with $i\in [\ell]$, which is isomorphic to $\ell P_n$.
For a pair $(X,X')\in\mathcal{P}^2$, let
\begin{align*}
    \mathcal{C}_F(X,X')&:=\{Y\in\mathcal{P}:(X,Y)\in F,~(X',Y)\in F\},\\
    \mathcal{L}_F(X,X')&:=\{Y\in\mathcal{P}:(X,Y)\in F,~(X',Y)\notin F\},\\
    \mathcal{R}_F(X,X')&:=\{Y\in\mathcal{P}:(X,Y)\notin F,~(X',Y)\in F\}.
\end{align*}
In addition, we define $\mathcal{D}_F(X,X')$ as 
\[ 
    (\mathcal C\times\mathcal L)
    \cup (\mathcal C\times\mathcal R)
    \cup (\mathcal L\times\mathcal R)
    \cup (\mathcal L\times\mathcal C)
    \cup (\mathcal R\times\mathcal C)
    \cup (\mathcal R\times\mathcal L)
\]
where $\mathcal{C}:=\mathcal{C}_F(X,X')$, $\mathcal{L}:=\mathcal{L}_F(X,X')$, and $\mathcal{R}:=\mathcal{R}_F(X,X')$.
Note that $\mathcal{D}_F(X,X')$ is symmetric.

\begin{lemma}\label{lem:pivoting}
    Let~$H$ be a graph, let~$\mathcal P$ be a collection of pairwise disjoint non-empty subsets of $V(H)$, and let~$F$ be a symmetric subset of $\mathcal{P}^2$.
    If the $(\mathcal{P},F)$-flip~$G$ of~$H$ has an edge~$uv$ between distinct ${X,X'\in\mathcal{P}}$, then for every subgraph~$H'$ of~$H$ whose vertex set is disjoint from $N_H[\{u,v\}]$, we have
    \[
        (G\wedge uv)[V(H')]=H'\oplus(\mathcal{P}|_{H'},(F\triangle\mathcal{D}_F(X,X'))|_{H'}).
    \]
\end{lemma}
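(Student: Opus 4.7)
The plan is to reduce everything to \cref{lem:Oum2004} by carefully classifying how each vertex of $V(H')$ sees $u$ and $v$ in $G$, and then tracking which edges among $V(H')$ get complemented by the pivot. The key point is that the hypothesis $V(H')\cap N_H[\{u,v\}]=\emptyset$ decouples the adjacency of such a vertex to $u$ or $v$ in $G$ from the base graph $H$, so this adjacency is determined entirely by $F$.

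First, I would apply \cref{lem:Oum2004} to the edge $uv$ in $G$, producing the sets $C:=N_G(u)\cap N_G(v)$, $L:=N_G(u)\setminus N_G[v]$, $R:=N_G(v)\setminus N_G[u]$, and noting that $G\wedge uv$ differs from $G$ on $V(H')$ precisely by complementing all edges with endpoints in distinct members of $\{C,L,R\}$ (the exchange of $u,v$ is irrelevant since $u,v\notin V(H')$). Next, for any $w\in V(H')$, the assumption $w\notin N_H[\{u,v\}]$ means $wu,wv\notin E(H)$, so by the definition of the $(\mathcal{P},F)$-flip we have $wu\in E(G)$ iff $(\mathcal{P}(w),X)\in F$, and $wv\in E(G)$ iff $(\mathcal{P}(w),X')\in F$. (If $w\notin\bigcup\mathcal{P}$ then $\mathcal{P}(w)=\{w\}\notin\mathcal{P}$, so both conditions fail, consistently with the fact that such a $w$ lies in none of $C,L,R$.) Therefore membership of $w\in V(H')$ in $C$, $L$, or $R$ is equivalent to $\mathcal{P}(w)$ lying in $\mathcal{C}_F(X,X')$, $\mathcal{L}_F(X,X')$, or $\mathcal{R}_F(X,X')$, respectively.

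Now take any two distinct vertices $w_1,w_2\in V(H')$. The previous paragraph shows that the unordered pair $\{w_1,w_2\}$ falls into one of the three complementation classes $\{C,L\}$, $\{L,R\}$, $\{R,C\}$ of \cref{lem:Oum2004} precisely when $(\mathcal{P}(w_1),\mathcal{P}(w_2))\in\mathcal{D}_F(X,X')$. Hence
\[
    w_1w_2\in E(G\wedge uv)\iff w_1w_2\in E(G)\;\mathrm{XOR}\;\bigl[(\mathcal{P}(w_1),\mathcal{P}(w_2))\in\mathcal{D}_F(X,X')\bigr].
\]
Expanding $E(G)$ via the definition of the $(\mathcal{P},F)$-flip gives
\[
    w_1w_2\in E(G\wedge uv)\iff w_1w_2\in E(H)\;\mathrm{XOR}\;\bigl[(\mathcal{P}(w_1),\mathcal{P}(w_2))\in F\triangle\mathcal{D}_F(X,X')\bigr].
\]
Restricting to $V(H')$ and unfolding the definitions of $\mathcal{P}|_{H'}$ and $(F\triangle\mathcal{D}_F(X,X'))|_{H'}$ yields exactly $H'\oplus(\mathcal{P}|_{H'},(F\triangle\mathcal{D}_F(X,X'))|_{H'})$.

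The only real pitfall, and therefore the main bookkeeping obstacle, is keeping the domains of $\mathcal{P}$ and $F$ aligned with the convention $\mathcal{P}(w)=\{w\}$ for $w\notin\bigcup\mathcal{P}$: such vertices must be shown to contribute nothing to $\mathcal{D}_F(X,X')$ and to lie outside $C\cup L\cup R$, which indeed both follow immediately from $F\subseteq\mathcal{P}^2$. Everything else is a direct unpacking of definitions.
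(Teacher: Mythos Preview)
Your proposal is correct and follows essentially the same approach as the paper's proof: both use \cref{lem:Oum2004} together with the observation that, for $w\in V(H')$, the condition $w\notin N_H[\{u,v\}]$ forces the $G$-adjacency of $w$ to $u$ and to $v$ to be governed purely by $F$, so that $w$ lands in $C$, $L$, or $R$ exactly according to whether $\mathcal{P}(w)$ lies in $\mathcal{C}_F(X,X')$, $\mathcal{L}_F(X,X')$, or $\mathcal{R}_F(X,X')$. Your write-up simply makes the XOR bookkeeping and the treatment of vertices outside $\bigcup\mathcal{P}$ more explicit than the paper does.
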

\begin{proof}
    Let $G':=(G\wedge uv)[V(H')]$.
    Without loss of generality, assume that ${u\in X}$ and ${v\in X'}$.
    Since $V(G')$ and $N_H[u]$ are disjoint, a vertex~$w\in V(G')$ is adjacent to~$u$ in~$G$ if and only if ${\mathcal{P}(w)\in\mathcal{C}_F(X,X')\cup\mathcal{L}_F(X,X')}$.
    Similarly, a vertex~$w\in V(G')$ is adjacent to~$v$ in~$G$ if and only if ${\mathcal{P}(w)\in\mathcal{C}_F(X,X')\cup\mathcal{R}_F(X,X')}$.
    Therefore, $G'=H'\oplus(\mathcal{P}|_{H'},(F\triangle\mathcal{D}_F(X,X'))|_{H'})$ by \cref{lem:Oum2004}.
\end{proof}

\begin{lemma}\label{lem:one edge}
    For integers $m,n\geq2$, let $\mathcal P$ be a coarsening of the column set of $mP_n$ and let $F$ be a symmetric subset of $\mathcal P^2$.
    If $(X,X')\in F$ for some $X,X'\in \mathcal{P}$, then the $(\mathcal{P},F)$-flip of $mP_n$ has at least one edge between~$X$ and~$X'$ whose one end is in the $(m-1)$-th row of~$mP_n$ and the other end is in the $m$-th row of~$mP_n$.
\end{lemma}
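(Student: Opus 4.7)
The plan is to exhibit the required edge explicitly, using the fact that in $mP_n$ the rows are the connected components, so any two vertices in different rows are non-adjacent in $mP_n$ and will therefore be joined in any flip whose pair lies in $F$.

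More concretely, since $\mathcal{P}$ is a coarsening of the column set of $mP_n$, both $X$ and $X'$ are non-empty unions of columns of $mP_n$. Pick a column $C_j\subseteq X$ and a column $C_{j'}\subseteq X'$ (these exist because the members of $\mathcal{P}$ are non-empty). Set $u:=(m-1,j)$ and $v:=(m,j')$. Then $u\in X$ and $v\in X'$ by construction. Because $u$ and $v$ lie in distinct rows of $mP_n$, and different rows belong to different connected components of $mP_n$, the vertices $u$ and $v$ are distinct and $uv\notin E(mP_n)$. Since $(X,X')=(\mathcal{P}(u),\mathcal{P}(v))\in F$, the definition of the $(\mathcal{P},F)$-flip immediately gives that $uv$ is an edge of that flip.

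There is essentially only one subtlety to check, namely the case $X=X'$ with $j=j'$: here we still have the two distinct vertices $(m-1,j)$ and $(m,j)$ sitting in the same class $X$, non-adjacent in $mP_n$ because they are in different rows, and $(X,X)\in F$ by hypothesis, so they are adjacent in the flip. Hence no additional argument is needed. The hypothesis $m\geq 2$ is used precisely so that rows $m-1$ and $m$ are both present (and distinct), while $n\geq 2$ is actually not needed for this lemma; a single column suffices. The whole argument is a direct unpacking of the definition, so there is no genuine obstacle.
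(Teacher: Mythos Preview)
Your proof is correct and follows essentially the same approach as the paper: both arguments use that vertices in distinct rows of $mP_n$ are non-adjacent (different components), so the flip adds all such edges between $X$ and $X'$ once $(X,X')\in F$. Your version is simply more explicit, naming specific vertices $(m-1,j)$ and $(m,j')$ and singling out the $X=X'$ case, whereas the paper states the conclusion in one sentence.
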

\begin{proof}
    Since~$mP_n$ has no edge between distinct rows, the $(\mathcal{P},F)$-flip of $mP_n$ has all edges between the set of vertices of $X$ in the $(m-1)$-th row and the set of vertices of $X'$ in the $m$-th row of $mP_n$.
\end{proof}

Let $\mathcal P$ be a coarsening of the column set of $(m+2)P_n$ with $\abs{\mathcal{P}}=k\geq2$ and let $F$ be a symmetric subset of $\mathcal P^2$.
Let~$G$ be the $(\mathcal{P},F)$-flip of $(m+2)P_n$.
In the following, we show that if~$\mathcal{P}$ contains 
\begin{itemize}
    \item a set~$X$ with $(X,X)\notin F$, or
    \item distinct sets~$X$ and~$X'$ with $(X,X')\notin F$ such that~$G$ has a $\bigcup\mathcal{P}$-path between~$X$ and~$X'$,
\end{itemize}
then $G$ contains a $(k-1)$-flipped $mP_n$ as a pivot-minor.

\begin{figure}
    \centering
    \begin{subfigure}{0.4\linewidth}
        \centering
        \begin{tikzpicture}[scale=0.8]
        \pgfdeclarelayer{background}
        \pgfdeclarelayer{foreground}
        \pgfsetlayers{background,main,foreground}
        \tikzstyle{v}=[circle, draw, solid, fill=black, inner sep=0pt, minimum width=3pt]
        \tikzset{c1/.style={purple, line width=6pt,opacity=0.5,line cap=round,shorten >=-2pt, shorten <=-2pt}}
        
        \foreach \i in {1,2,5,6,7,8}{
            \node [v] (v\i) at (0,5-\i*.5) {};
            \node [v] (w\i) at (1,5-\i*.5) {};
            \node [v] (z\i) at (2,5-\i*.5) {};
            \node [v] (a\i) at (3,5-\i*.5) {};
            \node [v] (b\i) at (4,5-\i*.5) {};
            \node [v] (c\i) at (5,5-\i*.5) {};
            \node [v] (d\i) at (6,5-\i*.5) {};
            \node [v] (f\i) at (7,5-\i*.5) {};
            \draw (v\i)--(w\i)--(z\i)--(a\i);
            \draw (b\i)--(c\i)--(d\i)--(f\i);
        }
        \foreach \i in {1,2,5,6,7,8}{
            \foreach \j in {1,2,5,6,7,8}{
		      \ifthenelse{\j=\i}{}{\draw(a\i)--(b\j);}
	        }
        }
        \foreach \i in {0,3,6}{
        \draw[rounded corners, thick] (0.5+\i,5)--(-0.2+\i,5)--(-0.2+\i,0.5)--(1.2+\i,0.5)--(1.2+\i,5)--(0.4+\i,5);
        }
        \node [label=$X$] (w) at (0.5,5){};
        \node [label=$X_1$] (w) at (3.5,5){};
        \node [label=$X_2$] (w) at (6.5,5){};
        \node [label=$\vdots$] (w) at (2,2.8){};
        \node [label=$\vdots$] (w) at (5,2.8){};
        
        \draw[very thick]    (0.5, 0.5) to[out=-30,in=-150] (3.5,0.5);
        \draw[very thick]    (3.5, 0.5) to[out=-30,in=-150] (6.5,0.5);
        \draw[very thick]    (0.5, 0.5) to[out=-40,in=-140] (6.5,0.5);
        
        \draw[thick] (a1)--(a8);\draw[thick] (b1)--(b8);
        
        \begin{pgfonlayer}{background}
            \draw[c1] (w7)--(a8);
        \end{pgfonlayer}
        \draw (w7)--(a8);
        \node [label=$u$] (v) at (0.8,0.8) {};
        \node [label=$v$] (v) at (3.2,0.3) {};
    \end{tikzpicture}
    \caption{A graph $G$ which is a flipped $(m+2)P_n$.}
    \end{subfigure}
    \hspace{1cm}
    \begin{subfigure}{0.4\linewidth}
        \centering
        \begin{tikzpicture}[scale=0.8]
        \pgfdeclarelayer{background}
        \pgfdeclarelayer{foreground}
        \pgfsetlayers{background,main,foreground}
        \tikzstyle{v}=[circle, draw, solid, fill=black, inner sep=0pt, minimum width=3pt]
        \tikzset{c1/.style={purple, line width=6pt,opacity=0.5,line cap=round,shorten >=-2pt, shorten <=-2pt}}
        
        \foreach \i in {1,2,5,6}{
            \node [v] (v\i) at (0,5-\i*.5) {};
            \node [v] (w\i) at (1,5-\i*.5) {};
            \node [v] (z\i) at (2,5-\i*.5) {};
            \node [v] (a\i) at (3,5-\i*.5) {};
            \node [v] (b\i) at (4,5-\i*.5) {};
            \node [v] (c\i) at (5,5-\i*.5) {};
            \node [v] (d\i) at (6,5-\i*.5) {};
            \node [v] (f\i) at (7,5-\i*.5) {};
            \draw (v\i)--(w\i)--(z\i)--(a\i);
            \draw (b\i)--(c\i)--(d\i)--(f\i);
        }
        \foreach \i in {1,2,5,6}{
            \foreach \j in {1,2,5,6}{
		      \ifthenelse{\j=\i}{}{\draw(a\i)--(b\j);}
	        }
        }
        \foreach \i in {0,3,6}{
        \draw[rounded corners, thick] (0.5+\i,5)--(-0.2+\i,5)--(-0.2+\i,0.5)--(1.2+\i,0.5)--(1.2+\i,5)--(0.4+\i,5);
        }
        \node [label=$X$] (w) at (0.5,5){};
        \node [label=$X_1$] (w) at (3.5,5){};
        \node [label=$X_2$] (w) at (6.5,5){};
        \node [label=$\vdots$] (w) at (2,2.8){};
        \node [label=$\vdots$] (w) at (5,2.8){};
        
        \draw[very thick, dashed]   (0.5, 0.5) to[out=-30,in=-150] (3.5,0.5);
        \draw[very thick]           (3.5, 0.5) to[out=-30,in=-150] (6.5,0.5);
        \draw[very thick, dashed]   (0.5, 0.5) to[out=-40,in=-140] (6.5,0.5);
        
        \draw[thick] (a1)--(a6);\draw[thick] (b1)--(b6);
    \end{tikzpicture}
    \caption{$G'=(G\wedge uv)\bigl[[m]\times [n]\bigr]$.}
    \end{subfigure}

    \caption{An example of the case $(X,X)\notin F$ in \cref{lem:case1} where $F$ consists of $(X, X_1)$, $(X, X_2)$, $(X_1, X_2)$, $(X_1, X_1)$ and their reverses.
    Note that $\mathcal{C}_F(X, X_1)=\{X_1, X_2\}$ and $\mathcal{R}_F(X, X_1)=\{X\}$.
    One can observe that $F\triangle\mathcal{D}_F(X,X_1)$ contains no pair $(X,X')$ for any $X'\in\mathcal{P}$, and $G'=(mP_n)\oplus(\mathcal{P}|_m,(F\triangle\mathcal{D}_F(X,X_1))|_m)$.}
    \label{fig:notXX}
\end{figure}
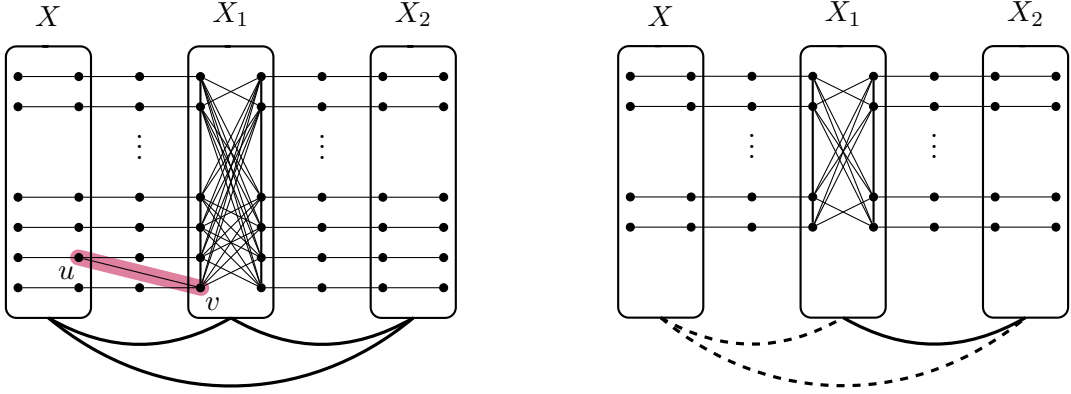

\begin{lemma}\label{lem:case1}
    For integers $m,n\geq1$, let $\mathcal P$ be a coarsening of the column set of $(m+2)P_n$ with $\abs{\mathcal{P}}=k\geq2$ and let~$F$ be a symmetric subset of $\mathcal P^2$.
    If~$\mathcal{P}$ contains a set~$X$ with ${(X,X)\notin F}$, then the $(\mathcal{P},F)$-flip of $(m+2)P_n$ contains a $(k-1)$-flipped $mP_n$ as a pivot-minor.
\end{lemma}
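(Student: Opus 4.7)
My plan is to split into two cases according to whether the special part $X$ is linked by $F$ to any other part of $\mathcal{P}$. Let $G$ denote the $(\mathcal{P},F)$-flip of $(m+2)P_n$ and let $H'$ be the subgraph of $(m+2)P_n$ induced on the first $m$ rows, so that $H' \cong mP_n$.

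For the main case, suppose that $(X, X') \in F$ for some $X' \in \mathcal{P}$; necessarily $X' \neq X$ since $(X,X) \notin F$. I will apply \cref{lem:one edge} to rows $m+1$ and $m+2$ to obtain an edge $uv$ of $G$ with $u \in X$ in row $m+1$ and $v \in X'$ in row $m+2$. Since all neighbors of $u$ and $v$ in $(m+2)P_n$ lie in rows $m+1$ and $m+2$, $V(H')$ will be disjoint from $N_{(m+2)P_n}[\{u,v\}]$, so \cref{lem:pivoting} gives
\[
    (G \wedge uv)[V(H')] = mP_n \oplus (\mathcal{P}|_m,\, F'|_m), \qquad \text{where } F' := F \triangle \mathcal{D}_F(X, X').
\]
The crux will be to verify that $X$ appears in no pair of $F'$. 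Since $(X,X)\notin F$ and $(X,X') \in F$ (hence $(X',X) \in F$ by symmetry), the definitions place $X$ in $\mathcal{R}_F(X, X')$. Then on the one hand, for any $Y \in \mathcal{P}$, the definitions of $\mathcal{C}_F$ and $\mathcal{L}_F$ give
\[
    (X, Y) \in F \iff Y \in \mathcal{C}_F(X, X') \cup \mathcal{L}_F(X, X');
\]
on the other hand, since $X \in \mathcal{R}_F(X, X')$, the definition of $\mathcal{D}_F$ gives
\[
    (X, Y) \in \mathcal{D}_F(X, X') \iff Y \in \mathcal{C}_F(X, X') \cup \mathcal{L}_F(X, X').
\]
These two conditions coincide, so $(X, Y) \notin F'$ for every $Y \in \mathcal{P}$. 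Consequently $X|_m$ lies in no pair of $F'|_m$, so I may drop it from the coarsening and rewrite the graph as $mP_n \oplus (\mathcal{P}|_m \setminus \{X|_m\},\, F'|_m)$, which is a $(k-1)$-flipped $mP_n$.

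For the remaining case, suppose $(X, X') \notin F$ for every $X' \in \mathcal{P}$. No pivot is needed: $G[V(H')]$ equals $mP_n \oplus (\mathcal{P}|_m, F|_m)$, and since $X$ appears in no pair of $F$, the set $X|_m$ appears in no pair of $F|_m$, so I can likewise drop it from the partition to obtain a $(k-1)$-flipped $mP_n$. The hard part of the argument is the cancellation in the first case: one must notice that $X$'s membership pattern in $F$ exactly matches its membership pattern in $\mathcal{D}_F(X, X')$, which is precisely why $\mathcal{D}_F$ is the correct symmetric difference associated with pivoting in \cref{lem:pivoting}.
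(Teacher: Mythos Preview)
Your proof is correct and follows essentially the same approach as the paper's: both split into the case where $X$ is isolated in $F$ (no pivot needed) versus the case where some $(X,X')\in F$, then use \cref{lem:one edge} to find an edge $uv$ between rows $m+1$ and $m+2$, apply \cref{lem:pivoting}, and verify via $X\in\mathcal{R}_F(X,X')$ that all pairs containing $X$ cancel in $F\triangle\mathcal{D}_F(X,X')$. Your presentation of the cancellation step is in fact slightly cleaner than the paper's, phrasing it as the coincidence of the two equivalences $(X,Y)\in F\iff Y\in\mathcal{C}_F\cup\mathcal{L}_F$ and $(X,Y)\in\mathcal{D}_F\iff Y\in\mathcal{C}_F\cup\mathcal{L}_F$.
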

\begin{proof}
    Let~$G$ be the $(\mathcal{P},F)$-flip of $(m+2)P_n$ and let~$X\in \mathcal{P}$ with $(X,X)\notin F$.
    Let~$d$ be the number of sets $X'\in\mathcal{P}$ with $(X,X')\in F$.
    If $d=0$, then~$G$ is a $(\mathcal{P}\setminus\{X\},F)$-flip of $(m+2)P_n$, which is a $(k-1)$-flipped $(m+2)P_n$.
    Thus, we may assume that $d>0$.
    Let $X_1,\ldots,X_d$ be the sets in~$\mathcal{P}$ such that for each $i\in[d]$, $(X,X_i)\in F$.
    Note that $X\neq X_i$ for each $i\in[d]$ as ${(X,X)\notin F}$.
    
    By \cref{lem:one edge}, $G$ has an edge~$uv$ with $u\in X$ and $v\in X_1$ such that~$u$ and~$v$ are in the $(m+1)$-th and $(m+2)$-th rows of $(m+2)P_n$, respectively.
    Since~$(m+2)P_n$ has no edge between distinct rows, $N_{(m+2)P_n}[\{u,v\}]$ is disjoint from $mP_n$.
    Thus, by \cref{lem:pivoting},
    \[
        G':=(G\wedge uv)\bigl[[m]\times[n]\bigr]=(mP_n)\oplus(\mathcal{P}|_m,(F\triangle\mathcal{D}_F(X,X_1))|_m).
    \]
    See \cref{fig:notXX} for an illustration.
    
    We show that~$G'$ is a $(k-1)$-flipped $mP_n$.
    Since the two neighbors of~$u$ in $(m+2)P_n$ are on the $(m+1)$-th row of $(m+2)P_n$, no vertex of $V(G')$ is adjacent to~$u$ in~$(m+2)P_n$.
    Similarly, no vertex of $V(G')$ is adjacent to~$v$ in~$(m+2)P_n$.
    Therefore, a vertex $w\in V(G')$ is adjacent to~$u$ in~$G$ if and only if ${\mathcal{P}(w)\in\mathcal{C}_F(X,X_1)\cup\mathcal{L}_F(X,X_1)}$.
    Similarly, a vertex $w\in V(G')$ is adjacent to~$v$ in~$G$ if and only if ${\mathcal{P}(w)\in\mathcal{C}_F(X,X_1)\cup\mathcal{R}_F(X,X_1)}$.
    Since ${(X,X)\notin F}$ and ${(X,X_i)\in F}$ for each $i\in[d]$, we have that ${X\in\mathcal{R}_F(X,X_1)}$ and
    \[
        \mathcal{C}_F(X,X_1)\cup\mathcal{L}_F(X,X_1)=\{X_1,\ldots,X_d\}.
    \]
    Thus, $F\triangle\mathcal{D}_F(X,X_1)$ contains no pair $(X,X')$ for any $X'\in\mathcal{P}$.
    Hence, there is no $X'\in\mathcal{P}$ such that
    \[
        (X|_m,X'|_m)\in(F\triangle\mathcal{D}_F(X,X_1))|_m,
    \]
    so~$G'$ is a $(\mathcal{P}\setminus\{X\})|_m$-flip of $mP_n$, which is a $(k-1)$-flipped $mP_n$.
    Clearly, $G'$ is a pivot-minor of~$G$.
\end{proof}

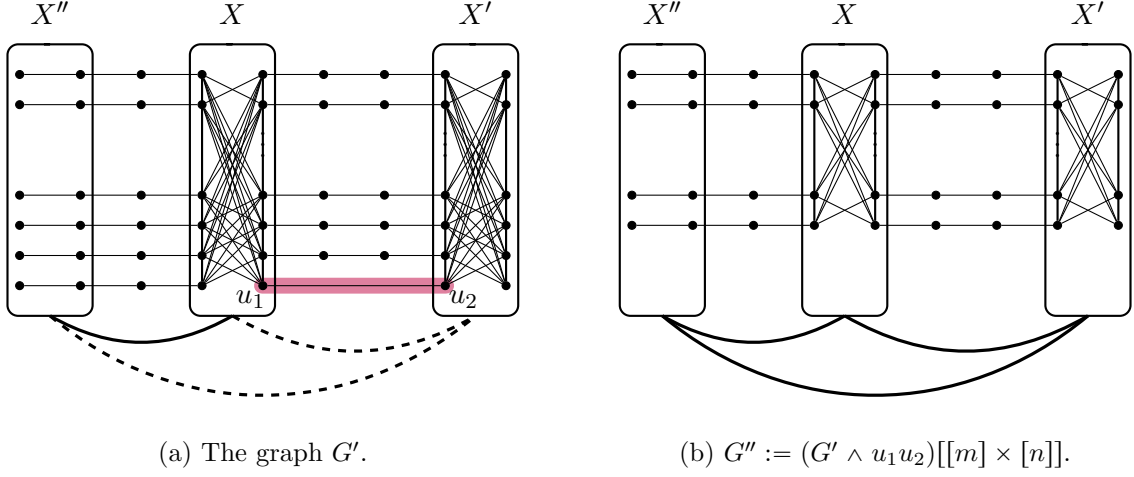
\begin{figure}
    \centering
    \begin{subfigure}{0.4\linewidth}
        \centering
        \begin{tikzpicture}[scale=0.8]
        \pgfdeclarelayer{background}
        \pgfdeclarelayer{foreground}
        \pgfsetlayers{background,main,foreground}
        \tikzstyle{v}=[circle, draw, solid, fill=black, inner sep=0pt, minimum width=3pt]
        \tikzset{c1/.style={purple, line width=6pt,opacity=0.5,line cap=round,shorten >=-2pt, shorten <=-2pt}}
        
            \foreach \i in {8}{
            \node [v] (g\i) at (-2,5-\i*.5) {};
            \node [v] (h\i) at (-1,5-\i*.5) {};
            \node [v] (v\i) at (0,5-\i*.5) {};
            \node [v] (w\i) at (1,5-\i*.5) {};
            \node [v] (z\i) at (2,5-\i*.5) {};
            \node [v] (c\i) at (5,5-\i*.5) {};
            \node [v] (d\i) at (6,5-\i*.5) {};
            \draw (g\i)--(h\i)--(v\i)--(w\i);
            \draw (z\i)--(c\i);
        }

        \foreach \i in {1,2,5,6,7}{
            \node [v] (g\i) at (-2,5-\i*.5) {};
            \node [v] (h\i) at (-1,5-\i*.5) {};
            \node [v] (v\i) at (0,5-\i*.5) {};
            \node [v] (w\i) at (1,5-\i*.5) {};
            \node [v] (z\i) at (2,5-\i*.5) {};
            \node [v] (a\i) at (3,5-\i*.5) {};
            \node [v] (b\i) at (4,5-\i*.5) {};
            \node [v] (c\i) at (5,5-\i*.5) {};
            \node [v] (d\i) at (6,5-\i*.5) {};
            \draw (g\i)--(h\i)--(v\i)--(w\i);
            \draw (z\i)--(a\i)--(b\i)--(c\i);
        }
        \foreach \i in {1,2,5,6,7,8}{
            \foreach \j in {1,2,5,6,7,8}{
		      \ifthenelse{\j=\i}{}{\draw(w\i)--(z\j);}
		      \ifthenelse{\j=\i}{}{\draw(c\i)--(d\j);}
	        }
        }
        \draw[thick] (w1)--(w8);\draw[thick] (z1)--(z8);
        \draw[thick] (c1)--(c8);\draw[thick] (d1)--(d8);

        \begin{pgfonlayer}{background}
            \draw[c1] (z8)--(c8);
        \end{pgfonlayer}
        
        \foreach \i in {-2,1,5}{
        \draw[rounded corners, thick] (0.5+\i,5)--(-0.2+\i,5)--(-0.2+\i,0.5)--(1.2+\i,0.5)--(1.2+\i,5)--(0.4+\i,5);
        }
        \node [label=$X''$] (w) at (-1.5,5){};
        \node [label=$X$] (w) at (1.5,5){};
        \node [label=$X'$] (w) at (5.5,5){};
        \node [label=$\vdots$] (w) at (2,2.8){};
        \node [label=$\vdots$] (w) at (5,2.8){};
        
        \draw[very thick, dashed]   (1.5, 0.5) to[out=-30,in=-150] (5.5,0.5);
        \draw[very thick]           (-1.5, 0.5) to[out=-30,in=-150] (1.5,0.5);
        \draw[very thick, dashed]   (-1.5, 0.5) to[out=-40,in=-140] (5.5,0.5);
        
        \node [label=$u_1$] (v) at (1.8,0.3) {};
        \node [label=$u_2$] (v) at (5.3,0.3) {};
    \end{tikzpicture}
    \caption{The graph $G'$.}
    \end{subfigure}
    \hspace{1cm}
    \begin{subfigure}{0.4\linewidth}
        \centering
        \begin{tikzpicture}[scale=0.8]
        \pgfdeclarelayer{background}
        \pgfdeclarelayer{foreground}
        \pgfsetlayers{background,main,foreground}
        \tikzstyle{v}=[circle, draw, solid, fill=black, inner sep=0pt, minimum width=3pt]
        \tikzset{c1/.style={purple, line width=6pt,opacity=0.5,line cap=round,shorten >=-2pt, shorten <=-2pt}}
        
        \foreach \i in {1,2,5,6}{
            \node [v] (g\i) at (-2,5-\i*.5) {};
            \node [v] (h\i) at (-1,5-\i*.5) {};
            \node [v] (v\i) at (0,5-\i*.5) {};
            \node [v] (w\i) at (1,5-\i*.5) {};
            \node [v] (z\i) at (2,5-\i*.5) {};
            \node [v] (a\i) at (3,5-\i*.5) {};
            \node [v] (b\i) at (4,5-\i*.5) {};
            \node [v] (c\i) at (5,5-\i*.5) {};
            \node [v] (d\i) at (6,5-\i*.5) {};
            \draw (g\i)--(h\i)--(v\i)--(w\i);
            \draw (z\i)--(a\i)--(b\i)--(c\i);
        }
        \foreach \i in {1,2,5,6}{
            \foreach \j in {1,2,5,6}{
		      \ifthenelse{\j=\i}{}{\draw(w\i)--(z\j);}
		      \ifthenelse{\j=\i}{}{\draw(c\i)--(d\j);}
	        }
        }
        \draw[thick] (w1)--(w6);\draw[thick] (z1)--(z6);
        \draw[thick] (c1)--(c6);\draw[thick] (d1)--(d6);
        
        \foreach \i in {-2,1,5}{
        \draw[rounded corners, thick] (0.5+\i,5)--(-0.2+\i,5)--(-0.2+\i,0.5)--(1.2+\i,0.5)--(1.2+\i,5)--(0.4+\i,5);
        }
        \node [label=$X''$] (w) at (-1.5,5){};
        \node [label=$X$] (w) at (1.5,5){};
        \node [label=$X'$] (w) at (5.5,5){};
        \node [label=$\vdots$] (w) at (2,2.8){};
        \node [label=$\vdots$] (w) at (5,2.8){};
        
        \draw[very thick]    (1.5, 0.5) to[out=-30,in=-150] (5.5,0.5);
        \draw[very thick]    (-1.5, 0.5) to[out=-30,in=-150] (1.5,0.5);
        \draw[very thick]    (-1.5, 0.5) to[out=-40,in=-140] (5.5,0.5);
    \end{tikzpicture}
    \caption{$G'':=(G'\wedge u_1u_2)[[m]\times [n]]$.}
    \end{subfigure}
    
    \caption{An example of~$G'$ in the induction statement in \cref{lem:case2} when $\ell=2$.
    Observe that $X''\in\mathcal{L}_F(X, X')$, and both $(X'',X)$ and $(X'',X')$ are contained in ${F\triangle\mathcal D_{F}(X,X')}$.
    Also, $(X,X')\in {F\triangle\mathcal D_{F}(X,X')}$.
    In this case, we obtain a coarsening of~$\mathcal{P}$ by merging~$X$ and~$X'$.}
    \label{fig:mergingXbase1}
\end{figure}

\begin{lemma}\label{lem:case2}
    For integers $m,n\geq1$, let $\mathcal P$ be a coarsening of the column set of $(m+2)P_n$ with $\abs{\mathcal{P}}=k\geq2$ and let~$F$ be a symmetric subset of $\mathcal P^2$.
    Let~$X$ and~$X'$ be distinct sets in~$\mathcal{P}$ such that $(X,X),(X',X')\in F$ and $(X,X')\notin F$.
    If the $(\mathcal{P},F)$-flip~$G$ of $(m+2)P_n$ has a $\bigcup\mathcal{P}$-path~$Q$ from~$X$ to~$X'$, then $G$ contains a $(k-1)$-flipped $mP_n$ as a pivot-minor.
\end{lemma}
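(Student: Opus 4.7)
The plan is to induct on $\ell$, the length of the path $Q$.

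First, I would establish the preliminary observation that, since every internal vertex $v$ of $Q$ lies outside $\bigcup\mathcal{P}$, we have $\mathcal{P}(v)=\{v\}\notin\mathcal{P}$ and so $v$ carries no flip-edges; its $G$-adjacencies coincide with its $(m+2)P_n$-adjacencies and hence lie in the same row as $v$. Combined with the hypothesis $(X,X')\notin F$ (which rules out a flip-edge between $u_1$ and $u_2$ in the $\ell=1$ case), this forces every edge of $Q$ to be an $(m+2)P_n$-edge, so $Q$ lies entirely in a single row of $(m+2)P_n$. Since $X$ and $X'$ are each unions of columns, the same sequence of columns produces an analogous $\bigcup\mathcal{P}$-path of the same length in every row, so I may assume $Q$ lies in row $m+2$.

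For the base case $\ell=1$, the single edge $u_1u_2$ of $Q$ lies in row $m+2$, and its closed $(m+2)P_n$-neighborhood is contained in row $m+2$, disjoint from $[m]\times[n]$. By \cref{lem:pivoting},
\[
    (G\wedge u_1u_2)\bigl[[m]\times[n]\bigr]=mP_n\oplus\bigl(\mathcal{P}|_m,(F\triangle\mathcal{D}_F(X,X'))|_m\bigr).
\]
Since $(X,X),(X',X')\in F$ and $(X,X')\notin F$, one has $X\in\mathcal{L}_F(X,X')$ and $X'\in\mathcal{R}_F(X,X')$. A direct case analysis in the four-way partition of $\mathcal{P}$ by membership in $\mathcal{C}_F(X,X')$, $\mathcal{L}_F(X,X')$, $\mathcal{R}_F(X,X')$, or their complement then shows that in $F^\star:=F\triangle\mathcal{D}_F(X,X')$ we have $(X,X),(X',X'),(X,X')\in F^\star$, and that $(X,Y)\in F^\star$ if and only if $(X',Y)\in F^\star$ for every $Y\in\mathcal{P}\setminus\{X,X'\}$. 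Hence merging $X|_m$ and $X'|_m$ into a single class yields a coarsening of $\mathcal{P}|_m$ of size $k-1$ whose induced symmetric flip relation realizes the graph as a $(k-1)$-flipped $mP_n$, as required.

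For the inductive step $\ell\geq2$, I would let $v_1$ be the first internal vertex of $Q$, adjacent to $u_1$ and to the next vertex $w$ along $Q$. Since $v_1\notin\bigcup\mathcal{P}$ has no flip-edges, its only $G$-neighbors are $u_1$ and $w$. Applying \cref{lem:shortening} with this degree-$2$ vertex $v_1$, we pivot $v_1u_1$ and delete $\{v_1,u_1\}$ to obtain $G^\dagger$, in which $w$'s new neighborhood is $(N_G(u_1)\triangle N_G(w))\setminus\{u_1,w\}$; a direct computation (noting that $N_G(w)$ outside row $m+2$ is empty) shows this is exactly the set of flip-neighbors of $X$ outside row $m+2$, so $w$ effectively takes on the role of a vertex of $X$. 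Augmenting $X$ to $X\cup\{w\}$ thereby realizes $G^\dagger$ as a flip of $(m+2)P_n-\{v_1,u_1\}$ admitting a $\bigcup\mathcal{P}$-path of length $\ell-1$ from $X\cup\{w\}$ to $X'$, and the inductive hypothesis — formulated with enough slack to accommodate the two removed vertices in row $m+2$ and the augmentation of $X$ — produces the desired $(k-1)$-flipped $mP_n$ pivot-minor.

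The main obstacle is the bookkeeping in the inductive step: verifying through the symmetric-difference formula of \cref{lem:shortening} that $w$'s new neighborhood matches precisely the flip-adjacencies of $X$ with respect to the augmented partition, stating the induction hypothesis flexibly enough that the two deletions in row $m+2$ and the augmentation of a block by one row-$m+2$ vertex are admissible, and treating separately the edge case $\ell=2$ in which $w=u_2\in X'$ (so that $w$'s new neighborhood must be read off as that of the merged class $X\cup X'$ rather than as a vertex adjoined to $X$).
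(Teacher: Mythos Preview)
Your base case (a single edge between $X$ and $X'$) is essentially the paper's $q=2$ case and is correct: pivoting $u_1u_2$ and restricting to $[m]\times[n]$ yields the $(\mathcal{P}|_m,(F\triangle\mathcal{D}_F(X,X'))|_m)$-flip of $mP_n$, and the case analysis showing that $X$ and $X'$ can be merged into one class is exactly what the paper does.

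The inductive step, however, has a genuine gap. Your claim that $G^\dagger$ is the flip of $(m+2)P_n-\{v_1,u_1\}$ with respect to the augmented partition is false. The problem is $u_1$'s \emph{other} $P_n$-neighbour $z$ in row $m+2$ (on the side of $u_1$ opposite to $Q$). If $z$ lies in a class $Y$ with $(X,Y)\in F$ --- for instance $z$ could lie in $X$ itself, since $(X,X)\in F$ --- then $z$ is simultaneously a flip-partner and a $P_n$-neighbour of $u_1$, so $z\notin N_G(u_1)$; since also $z\notin N_G(w)$, we get $z\notin N_{G^\dagger}(w)$. But in your augmented flip $w$ and $z$ would both sit in classes with a flip-relation and would not be $H$-adjacent, so they \emph{should} be adjacent. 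More broadly, $N_G(u_1)$ contains every flip-partner of $u_1$ in row $m+2$, and all of these are dumped into $w$'s new neighbourhood by \cref{lem:shortening}; so $w$'s row-$(m+2)$ adjacencies in $G^\dagger$ do not match those of any vertex of class $X\cup\{w\}$. Thus $G^\dagger$ is not a flip of the required shape, and your induction hypothesis, as you describe it, does not apply.

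The paper sidesteps this with a different reduction. It strengthens the statement to ``replace $Q$ by a shorter path with the \emph{same endpoints} $u_1\in X$, $u_\ell\in X'$ and the same parity'', and for $\ell\geq 4$ pivots the edge $u_2u_3$ between two consecutive \emph{internal} degree-$2$ vertices. By \cref{lem:shortening} this simply replaces $u_1u_2u_3u_4$ by the edge $u_1u_4$, leaving $u_1$ completely untouched and cleanly reducing $\ell$ by $2$. Because parity is preserved, two base cases are needed ($\ell=2$ and $\ell=3$), and the $\ell=3$ case is substantially harder than your sketch of the ``edge case $\ell=2$'' suggests: it requires an additional pivot using a vertex of row $m+1$ to carry out the merge. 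Your idea of sliding $u_1$'s role along the path \emph{can} be salvaged, but only with a much weaker invariant (track just the inheritor's neighbourhood inside $[m]\times[n]$, not a global flip description), and the terminal step for the even-length parity still needs that extra row-$(m+1)$ trick.
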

\begin{proof}
    Since~$Q$ is a $\bigcup\mathcal{P}$-path in~$G$ from $X$ to $X'$ and $(X,X')\notin F$, it is a subpath of~$(m+2)P_n$.
    We may assume that~$Q$ is a subpath of the $(m+2)$-th row of $(m+2)P_n$.
    Note that every internal vertex of~$Q$ has degree~$2$ in~$G$.
    Let $q:=\abs{V(Q)}$.
    As~$X$ and~$X'$ are distinct, $q$ is at least~$2$.
    
    We are going to prove a stronger statement: 
    \begin{quote}
        For every integer~$\ell$ with $2\leq\ell\leq q$ and $\ell\equiv q\pmod{2}$, if a graph~$G'$ is obtained from~$G$ by replacing~$Q$ with a path $u_1u_2\cdots u_\ell$, where ${u_1\in X}$ and ${u_\ell\in X'}$ are the ends of $Q$ and $u_2,\dots,u_{\ell-1}\notin V(G)$, then~$G'$ contains a $(k-1)$-flipped $mP_n$ as a pivot-minor.
    \end{quote}
    Let $H$ be the graph obtained from $(m+2)P_n$ by replacing $Q$ with the same path $u_1u_2\cdots u_\ell$.
    Clearly, $G'$ is the $(\mathcal{P},F)$-flip of $H$.
    
    We proceed by induction on~$\ell$.
    First, let us assume that $\ell=2$.
    See~\cref{fig:mergingXbase1} for an illustration.
    In this case, $u_1u_2$ is an edge between~$X$ and~$X'$.
    Let
    \begin{align*}
        G''&:=(G'\wedge u_1u_2)\bigl[[m]\times[n]\bigr],\\
        F''&:=(F\triangle\mathcal{D}_F(X,X'))|_m.
    \end{align*}
    Since~$(m+2)P_n$ has no edge between distinct rows and $u_1, u_2$ are contained in the $(m+2)$-th row, $N_{H}[\{u_1,u_2\}]$ is disjoint from $mP_n$ in $H$.
    Thus by \cref{lem:pivoting}, $G''=(mP_n)\oplus(\mathcal{P}|_m,F'')$.
    Also, note that~$G''$ is a pivot-minor of~$G'$.

    We claim that~$G''$ is a $(k-1)$-flipped $mP_n$.
    Since $u_1$ has no neighbors in~$V(G'')$ in~$(m+2)P_n$, a vertex $w\in V(G'')$ is adjacent to~$u_1$ in~$G'$ if and only if $\mathcal{P}(w)\in\mathcal{C}_F(X,X')\cup\mathcal{L}_F(X,X')$.
    Similarly, a vertex $w\in V(G'')$ is adjacent to~$u_2$ in~$G'$ if and only if $\mathcal{P}(w)\in\mathcal{C}_F(X,X')\cup\mathcal{R}_F(X,X')$.
    Since $(X,X),(X',X')\in F$ and $(X,X')\notin F$, we have that $X\in\mathcal{L}_F(X,X')$ and $X'\in\mathcal{R}_F(X,X')$.

    Observe that for every $Z\in\mathcal{P}\setminus \{X,X'\}$,
    \begin{itemize}
        \item if $Z\in \mathcal{L}_F(X,X')$, then $(X,Z)\notin \mathcal{D}_F(X,X')$, $(X',Z)\in \mathcal{D}_F(X,X')$, and $(X,Z),(X',Z)\in F\triangle\mathcal{D}_F(X,X')$,
        \item if $Z\in \mathcal{R}_F(X,X')$, then $(X,Z)\in \mathcal{D}_F(X,X')$, $(X',Z)\notin \mathcal{D}_F(X,X')$, and $(X,Z),(X',Z)\in F\triangle\mathcal{D}_F(X,X')$, 
        \item if $Z\in \mathcal{C}_F(X,X')$,
        then $(X,Z),(X',Z)\in \mathcal{D}_F(X,X')$ and $(X,Z), (X',Z)\notin F\triangle\mathcal{D}_F(X,X')$.
    \end{itemize}
    Also, $(X,X')\in F\triangle \mathcal{D}_F(X,X')$.
    Therefore, for every ${Z\in\mathcal{P}}$, ${(X,Z)\in F\triangle\mathcal D_{F}(X,X')}$ if and only if ${(X',Z)\in F\triangle\mathcal D_{F}(X,X')}$.
    
    Let ${\mathcal{P}_0:=(\mathcal{P}\setminus\{X,X'\})\cup\{X\cup X'\}}$.
    Let~$F_0$ be the set obtained from~$F$ by removing all pairs which contain~$X$ or~$X'$ and adding the pairs in
    \[
        \{X\cup X'\}\times\Bigl(\bigl(\{X\cup X'\}\cup\mathcal{L}_F(X,X')\cup\mathcal{R}_F(X,X')\bigr)\setminus\{X,X'\}\Bigr)
    \]
    and their reverses.
    Note that~$F_0$ is a symmetric subset of~$\mathcal{P}_0^2$.
    By the observation, 
    the ${(\mathcal P,F\triangle\mathcal{D}_F(X,X'))}$-flip is identical to the $(\mathcal P_0,F_0)$-flip, and thus~$G''$ is the $(\mathcal{P}_0|_m,F_0|_m)$-flip of~$mP_n$.
    Therefore, $G''$ is a $(k-1)$-flipped $mP_n$ and a pivot-minor of~$G'$.

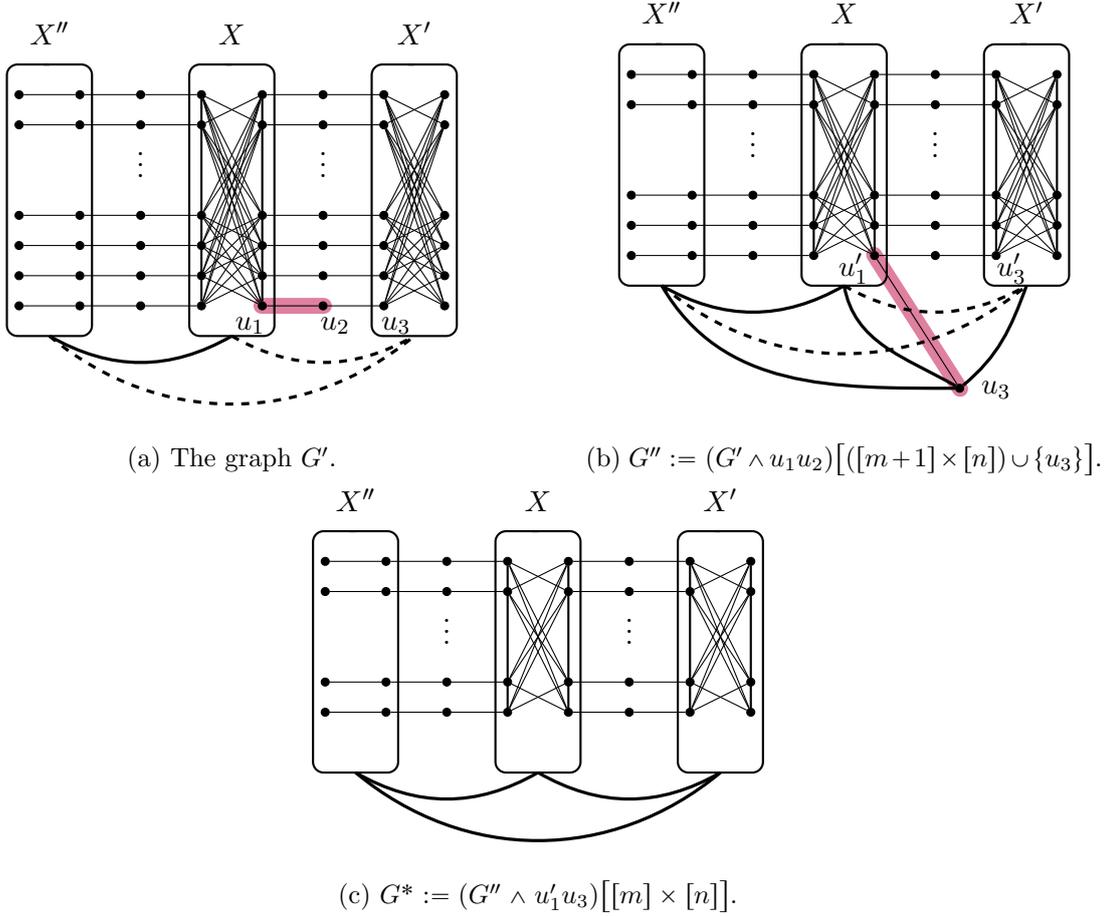
\begin{figure}
    \centering
    \begin{subfigure}{0.4\linewidth}
        \centering
        \begin{tikzpicture}[scale=0.8]
        \pgfdeclarelayer{background}
        \pgfdeclarelayer{foreground}
        \pgfsetlayers{background,main,foreground}
        \tikzstyle{v}=[circle, draw, solid, fill=black, inner sep=0pt, minimum width=3pt]
        \tikzset{c1/.style={purple, line width=6pt,opacity=0.5,line cap=round,shorten >=-2pt, shorten <=-2pt}}
        
        \foreach \i in {1,2,5,6,7,8}{
            \node [v] (v\i) at (0,5-\i*.5) {};
            \node [v] (w\i) at (1,5-\i*.5) {};
            \node [v] (z\i) at (2,5-\i*.5) {};
            \node [v] (a\i) at (3,5-\i*.5) {};
            \node [v] (b\i) at (4,5-\i*.5) {};
            \node [v] (c\i) at (5,5-\i*.5) {};
            \node [v] (d\i) at (6,5-\i*.5) {};
            \node [v] (f\i) at (7,5-\i*.5) {};
            \draw (v\i)--(w\i)--(z\i)--(a\i);
            \draw (b\i)--(c\i)--(d\i);
        }
        \foreach \i in {1,2,5,6,7,8}{
            \foreach \j in {1,2,5,6,7,8}{
		      \ifthenelse{\j=\i}{}{\draw(a\i)--(b\j);}
              \ifthenelse{\j=\i}{}{\draw(d\i)--(f\j);}
	        }
        }
        \foreach \i in {0,3,6}{
        \draw[rounded corners, thick] (0.5+\i,5)--(-0.2+\i,5)--(-0.2+\i,0.5)--(1.2+\i,0.5)--(1.2+\i,5)--(0.4+\i,5);
        }
        \node [label=$X''$] (w) at (0.5,5){};
        \node [label=$X$] (w) at (3.5,5){};
        \node [label=$X'$] (w) at (6.5,5){};
        \node [label=$\vdots$] (w) at (2,2.8){};
        \node [label=$\vdots$] (w) at (5,2.8){};
        
        \draw[very thick]    (0.5, 0.5) to[out=-30,in=-150] (3.5,0.5);
        \draw[very thick,dashed]    (3.5, 0.5) to[out=-30,in=-150] (6.5,0.5);
        \draw[very thick,dashed]    (0.5, 0.5) to[out=-40,in=-140] (6.5,0.5);
        
        \draw[thick] (a1)--(a8);\draw[thick] (b1)--(b8);
        
        \begin{pgfonlayer}{background}
            \draw[c1] (b8)--(c8);
        \end{pgfonlayer}
        \node [label=$u_1$] (v) at (3.8,0.2) {};
        \node [label=$u_2$] (v) at (5.2,0.2) {};
        \node [label=$u_3$] (v) at (6.2,0.2) {};
    \end{tikzpicture}
    \caption{The graph $G'$.}
    \end{subfigure}
    \hspace{1cm}
    \begin{subfigure}{0.4\linewidth}
        \centering
        \begin{tikzpicture}[scale=0.8]
        \pgfdeclarelayer{background}
        \pgfdeclarelayer{foreground}
        \pgfsetlayers{background,main,foreground}
        \tikzstyle{v}=[circle, draw, solid, fill=black, inner sep=0pt, minimum width=3pt]
        \tikzset{c1/.style={purple, line width=6pt,opacity=0.5,line cap=round,shorten >=-2pt, shorten <=-2pt}}
        
        \foreach \i in {1,2,5,6,7}{
            \node [v] (v\i) at (0,5-\i*.5) {};
            \node [v] (w\i) at (1,5-\i*.5) {};
            \node [v] (z\i) at (2,5-\i*.5) {};
            \node [v] (a\i) at (3,5-\i*.5) {};
            \node [v] (b\i) at (4,5-\i*.5) {};
            \node [v] (c\i) at (5,5-\i*.5) {};
            \node [v] (d\i) at (6,5-\i*.5) {};
            \node [v] (f\i) at (7,5-\i*.5) {};
            \draw (v\i)--(w\i)--(z\i)--(a\i);
            \draw (b\i)--(c\i)--(d\i);
        }

        \node [v] (z) at (5.4,-0.7) {};
        \node [label=$u_3$] (v) at (6,-1.2) {};
        \draw (z) -- (b7);
   
        \foreach \i in {1,2,5,6,7}{
            \foreach \j in {1,2,5,6,7}{
		      \ifthenelse{\j=\i}{}{\draw(a\i)--(b\j);}
              \ifthenelse{\j=\i}{}{\draw(d\i)--(f\j);}
	        }
        }
        \foreach \i in {0,3,6}{
        \draw[rounded corners, thick] (0.5+\i,5)--(-0.2+\i,5)--(-0.2+\i,1)--(1.2+\i,1)--(1.2+\i,5)--(0.4+\i,5);
        }
        \node [label=$X''$] (w) at (0.5,5){};
        \node [label=$X$] (w) at (3.5,5){};
        \node [label=$X'$] (w) at (6.5,5){};
        \node [label=$\vdots$] (w) at (2,2.8){};
        \node [label=$\vdots$] (w) at (5,2.8){};
        
        \draw[very thick]   (0.5, 1) to[out=-30,in=-150] (3.5,1);
        \draw[very thick, dashed]           (3.5, 1) to[out=-30,in=-150] (6.5,1);
        \draw[very thick, dashed]   (0.5, 1) to[out=-40,in=-140] (6.5,1);

        \draw[very thick] (z) to[out=180,in=-60] (0.5,1);
        \draw[very thick] (z) to[out=150,in=-80] (3.5,1);
        \draw[very thick] (z) to[out=40,in=-110] (6.5,1);
        
        \node [label=$u_1'$] (v) at (3.65,0.7) {};
        \node [label=$u_3'$] (v) at (6.25,0.7) {};

        \draw[thick] (a1)--(a7);\draw[thick] (b1)--(b7);\draw[thick] (d1)--(d7);\draw[thick] (f1)--(f7);
        \begin{pgfonlayer}{background}
            \draw[c1] (b7)--(z);
        \end{pgfonlayer}
 
    \end{tikzpicture}
    \caption{$G'':=(G'\wedge u_1u_2)\bigl[([m+1]\times[n])\cup\{u_3\}\bigr]$.}
    \end{subfigure}
    \hspace{1cm}
    \begin{subfigure}{0.4\linewidth}
        \centering
        \begin{tikzpicture}[scale=0.8]
        \pgfdeclarelayer{background}
        \pgfdeclarelayer{foreground}
        \pgfsetlayers{background,main,foreground}
        \tikzstyle{v}=[circle, draw, solid, fill=black, inner sep=0pt, minimum width=3pt]
        \tikzset{c1/.style={purple, line width=6pt,opacity=0.5,line cap=round,shorten >=-2pt, shorten <=-2pt}}
        
        \foreach \i in {1,2,5,6}{
            \node [v] (v\i) at (0,5-\i*.5) {};
            \node [v] (w\i) at (1,5-\i*.5) {};
            \node [v] (z\i) at (2,5-\i*.5) {};
            \node [v] (a\i) at (3,5-\i*.5) {};
            \node [v] (b\i) at (4,5-\i*.5) {};
            \node [v] (c\i) at (5,5-\i*.5) {};
            \node [v] (d\i) at (6,5-\i*.5) {};
            \node [v] (f\i) at (7,5-\i*.5) {};
            \draw (v\i)--(w\i)--(z\i)--(a\i);
            \draw (b\i)--(c\i)--(d\i);
        }
   
        \foreach \i in {1,2,5,6}{
            \foreach \j in {1,2,5,6}{
		      \ifthenelse{\j=\i}{}{\draw(a\i)--(b\j);}
              \ifthenelse{\j=\i}{}{\draw(d\i)--(f\j);}
	        }
        }
        \foreach \i in {0,3,6}{
        \draw[rounded corners, thick] (0.5+\i,5)--(-0.2+\i,5)--(-0.2+\i,1)--(1.2+\i,1)--(1.2+\i,5)--(0.4+\i,5);
        }
        \node [label=$X''$] (w) at (0.5,5){};
        \node [label=$X$] (w) at (3.5,5){};
        \node [label=$X'$] (w) at (6.5,5){};
        \node [label=$\vdots$] (w) at (2,2.8){};
        \node [label=$\vdots$] (w) at (5,2.8){};
        
        \draw[very thick]   (0.5, 1) to[out=-30,in=-150] (3.5,1);
        \draw[very thick]           (3.5, 1) to[out=-30,in=-150] (6.5,1);
        \draw[very thick]   (0.5, 1) to[out=-40,in=-140] (6.5,1);
       
        \draw[thick] (a1)--(a6);\draw[thick] (b1)--(b6);\draw[thick] (d1)--(d6);\draw[thick] (f1)--(f6);
 
    \end{tikzpicture}
    \caption{$G^*:=(G''\wedge u'_1u_3)\bigl[[m]\times[n]\bigr]$.}
    \end{subfigure}

    \caption{An example of~$G'$ in the induction statement in \cref{lem:case2} when $\ell=3$.
    In this case, we first pivot $u_1u_2$ and then pivot $u_1'u_3$ to obtain $G^*$.}
    \label{fig:mergingXbase2}
\end{figure}
    
    Next, let us consider the case that $\ell=3$.
    See~\cref{fig:mergingXbase2} for an illustration.
    Let
    \begin{align*}
        G''&:=(G'\wedge u_1u_2)\bigl[([m+1]\times[n])\cup\{u_3\}\bigr],\\
        H''&:=((m+2)P_n)\bigl[([m+1]\times[n])\cup\{u_3\}\bigr].
    \end{align*}
    Let~$\mathcal{P}''$ be the set obtained from~$\mathcal{P}|_{m+1}$ by adding~$\{u_3\}$ and let~$F''$ be the set obtained from~$F|_{m+1}$ by adding $(\{u_3\},Z|_{m+1})$ for every $Z\in\mathcal{L}_F(X,X')\cup\mathcal{R}_F(X,X')$ and their reverses.
    Since~$u_1$ and~$u_3$ are the only neighbors of~$u_2$ in~$G'$, by \cref{lem:shortening},
    \begin{align}
        N_{G''}(u_3)&=(N_{G'}(u_1)\triangle N_{G'}(u_3))\cap([m+1]\times[n]),\label{eq1}\\
        G''-u_3&=G'\bigl[[m+1]\times[n]\bigr]=G\bigl[[m+1]\times[n]\bigr].\notag
    \end{align}
    Therefore, $G''$ is the $(\mathcal{P}'',F'')$-flip of~$H''$, and $G''-u_3$ is the $(\mathcal{P}|_{m+1},F|_{m+1})$-flip of $(m+1)P_n$.
    Note that~$u_3$ is adjacent to each vertex in $X|_{m+1}$ in~$G''$ as $X\in\mathcal{L}_F(X,X')$.

    Let~$u'_1$ be the vertex on the $(m+1)$-th row of $(m+2)P_n$ which is in the same column as~$u_1$.
    Observe that $u'_1u_3$ is an edge of~$G''$.
    Let
    \begin{align*}
        G^*&:=(G''\wedge u'_1u_3)\bigl[[m]\times[n]\bigr],\\
        F^*&:=(F\triangle\mathcal{D}_F(X,X'))|_m.
    \end{align*}
    Note that~$G^*$ is a pivot-minor of~$G'$.
    By \eqref{eq1}, we observe that
    \begin{align*}
        \mathcal{C}_F(X,X')|_{m+1}&=\mathcal{L}_{F''}(X|_{m+1},\{u_3\}),\\
        \mathcal{L}_F(X,X')|_{m+1}&=\mathcal{C}_{F''}(X|_{m+1},\{u_3\}),\\
        \mathcal{R}_F(X,X')|_{m+1}&=\mathcal{R}_{F''}(X|_{m+1},\{u_3\}).
    \end{align*}
    Therefore, $F^*=(F''\triangle\mathcal{D}_{F''}(X,\{u_3\}))|_m$.
    By applying~\cref{lem:pivoting} for $\{u_1', u_3\}$ in $H''$, we have that $G^*$ is the $(\mathcal{P}''|_m, F^*)$-flip of $mP_n$, which is the $(\mathcal{P}|_m,F^*)$-flip of $mP_n$.

    We claim that~$G^*$ is a $(k-1)$-flipped $mP_n$.
    Similar to~$u'_1$, let~$u'_3$ be the vertex on the $(m+1)$-th row of $(m+2)P_n$ which is in the same column as~$u_3$.
    By the definition of a flipped $(m+2)P_n$, for each $i\in\{1,3\}$, we have
    \[
        N_{G'}(u_i)\cap([m]\times[n])=N_{G'}(u'_i)\cap([m]\times[n])=N_{G''}(u'_i)\cap([m]\times[n]).
    \]
    Therefore, together with~\eqref{eq1}, we derive that
    \begin{align}
        N_{G''}(u_3)\cap([m]\times[n])=(N_{G''}(u'_1)\triangle N_{G''}(u'_3))\cap([m]\times[n]).\label{eq2}
    \end{align}
    
    Since $G''-u_3$ is the $(\mathcal{P}|_{m+1},F|_{m+1})$-flip of $(m+1)P_n$ and~$u'_1$ has no neighbors in $V(G^*)$ in $(m+1)P_n$, $w\in V(G^*)$ is adjacent to~$u'_1$ in~$G''$ if and only if ${\mathcal{P}(w)\in\mathcal{C}_F(X,X')\cup\mathcal{L}_F(X,X')}$.
    Similarly, $w\in V(G^*)$ is adjacent to~$u'_3$ in~$G''$ if and only if ${\mathcal{P}(w)\in\mathcal{C}_F(X,X')\cup\mathcal{R}_F(X,X')}$.
    Therefore, by~\eqref{eq2}, $w\in V(G^*)$ is adjacent to~$u_3$ in~$G''$ if and only if ${\mathcal{P}(w)\in\mathcal{L}_F(X,X')\cup\mathcal{R}_F(X,X')}$.
    Hence, we have
    \begin{align*}
        (N_{G''}(u'_1)\cap N_{G''}(u_3))\cap V(G^*)&=\bigcup_{Z\in\mathcal{L}_F(X,X')}Z|_m,\\
        (N_{G''}(u'_1)\setminus N_{G''}(u_3))\cap V(G^*)&=\bigcup_{Z\in\mathcal{C}_F(X,X')}Z|_m,\\
        (N_{G''}(u_3)\setminus N_{G''}(u'_1))\cap V(G^*)&=\bigcup_{Z\in\mathcal{R}_F(X,X')}Z|_m.
    \end{align*}

    Since ${X\in\mathcal{L}_F(X,X')}$, we observe that for every ${Z\in\mathcal P}$, ${(X,Z)\in F\triangle\mathcal D_{F}(X,X')}$ if and only if ${Z\in\mathcal{L}_F(X,X')\cup\mathcal{R}_F(X,X')}$.
    Similarly, since ${X'\in\mathcal{R}_F(X,X')}$, we observe that for every ${Z\in\mathcal P}$,  ${(X',Z)\in F\triangle\mathcal D_{F}(X,X')}$ if and only if $Z\in\mathcal{L}_F(X,X')\cup\mathcal{R}_F(X,X')$.
    Therefore, for every ${Z\in\mathcal{P}}$, ${(X,Z)\in F\triangle\mathcal D_{F}(X,X')}$ if and only if ${(X',Z)\in F\triangle\mathcal D_{F}(X,X')}$.
    In addition, for every ${Z\in\mathcal{C}_F(X,X')}$, neither~$(X,Z)$ nor~$(X',Z)$ is contained in ${F\triangle\mathcal D_{F}(X,X')}$.
    
    Thus, for the same~$\mathcal{P}_0$ and~$F_0$ as in the previous argument, the $(\mathcal P,F\triangle\mathcal{D}_F(X,X'))$-flip is identical to the $(\mathcal P_0,F_0)$-flip.
    Hence, $G^*$ is the $(\mathcal{P}_0|_m,F_0|_m)$-flip of~$mP_n$, and therefore $G^*$ is a $(k-1)$-flipped $mP_n$ and a pivot-minor of~$G'$.
    
    We now suppose that $\ell\geq4$.
    Let $G'':=G'\wedge u_2u_3-\{u_2,u_3\}$.
    Since each of~$u_2$ and~$u_3$ has degree~$2$ in~$G'$, by \cref{lem:shortening}, $G''$ is identical to the graph obtained from~$G'$ by replacing the path ${u_1u_2u_3u_4}$ with an edge~$u_1u_4$.
    By the inductive hypothesis for $\ell-2$, $G''$ has a $(k-1)$-flipped $mP_n$ as a pivot-minor, which is a pivot-minor of~$G'$ as well.
    This completes the proof.
\end{proof}

We have the following corollary of \cref{lem:case1,lem:case2}.

\begin{corollary}\label{cor:cases}
    For integers $m,n\geq1$, let~$\mathcal P$ be a coarsening of the column set of $(m+4)P_n$ with $\abs{\mathcal{P}}=k\geq2$.
    Then every $\mathcal P$-flip of $(m+4)P_n$ contains a $(k-1)$-flipped $mP_n$ as a pivot-minor.
\end{corollary}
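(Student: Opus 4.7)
The plan is a short case analysis that applies either Lemma~\ref{lem:case1} or Lemma~\ref{lem:case2} to $G$ viewed as a $(\mathcal{P},F)$-flip of $((m+2)+2)P_n$, i.e., with the lemma's parameter ``$m$'' instantiated as $m+2$. Whichever lemma applies yields a $(k-1)$-flipped $(m+2)P_n$ pivot-minor of $G$; restricting this pivot-minor to its first $m$ rows is an induced subgraph, hence a pivot-minor, and is a $(k-1)$-flipped $mP_n$ (each part $X$ of the intermediate partition restricts to a non-empty $X|_m$, since each part is a union of full columns and $m\ge 1$). The two surplus rows of $(m+4)P_n$ beyond the $m+2$ consumed by the lemma are exactly absorbed by this final truncation.

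If some $X\in\mathcal{P}$ satisfies $(X,X)\notin F$, then Lemma~\ref{lem:case1} applies directly. Otherwise $(X,X)\in F$ for every $X\in\mathcal{P}$, and I aim for Lemma~\ref{lem:case2}. Its self-loop hypotheses then hold automatically, so I need distinct $X,X'\in\mathcal{P}$ with $(X,X')\notin F$ together with a $\bigcup\mathcal{P}$-path from $X$ to $X'$ in $G$. Any such path has its internal vertices outside every member of $\mathcal{P}$, and since $(X,X')\notin F$ preserves the $(m+4)P_n$-edges between $X$- and $X'$-columns while edges involving leftover columns are preserved regardless of $F$, such a path is simply a segment of a single row of $(m+4)P_n$ that starts in an $X$-column, traverses only leftover columns (those outside $\bigcup\mathcal{P}$), and ends in an $X'$-column. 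Hence it exists whenever $X$ and $X'$ are \emph{column-consecutive}, meaning consecutive in the ordered sequence of parts along $[n]$ after collapsing leftover columns; for any column-consecutive pair with $(X,X')\notin F$, Lemma~\ref{lem:case2} then applies.

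The main obstacle is the configuration in which every column-consecutive pair lies in $F$, most extremely when $F=\mathcal{P}^2$; there, neither lemma's hypothesis is met on the nose. I expect to handle this by a preparatory pivot: because $(X,X)\in F$ forces same-column distinct pairs within any $X$ to be edges of $G$, there is always an available edge $uv$ with $u,v\in X$. Pivoting at such an edge transforms the flip structure (computable via Lemma~\ref{lem:pivoting}) in a way that either produces a part with $(X,X)\notin F$ (reducing to Case~1 on the pivoted graph) or produces a column-consecutive pair outside the new $F$ (activating Lemma~\ref{lem:case2}). The two spare rows of $(m+4)P_n$ give exactly the room to absorb this preliminary pivot together with the subsequent truncation to $mP_n$; verifying that a suitable preparatory pivot always exists is the delicate step I expect to require the most care.
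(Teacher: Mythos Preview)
Your overall architecture matches the paper's: reduce to Lemma~\ref{lem:case1} or Lemma~\ref{lem:case2} whenever possible, and in the residual case perform one preparatory pivot that consumes the two spare rows. The easy cases and the truncation from $(m+2)P_n$ to $mP_n$ are fine. The gap is in your handling of the residual case.

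Your proposed preparatory pivot is on an edge $uv$ with \emph{both} endpoints in the same part $X$. This cannot work. First, Lemma~\ref{lem:pivoting} is stated only for an edge between \emph{distinct} parts, so you cannot invoke it. More fundamentally, if $u$ and $v$ lie in rows $m+3$ and $m+4$ and both belong to $X$, then for every $w\in[m+2]\times[n]$ we have $uw\in E(G)$ iff $(X,\mathcal{P}(w))\in F$ iff $vw\in E(G)$; that is, $u$ and $v$ have identical neighbourhoods in $[m+2]\times[n]$. Hence in Lemma~\ref{lem:Oum2004} the sets $L$ and $R$ are empty on $[m+2]\times[n]$, and $(G\wedge uv)\bigl[[m+2]\times[n]\bigr]=G\bigl[[m+2]\times[n]\bigr]$. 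The pivot is a no-op on the rows you keep, so it neither creates a part with $(X,X)\notin F$ nor breaks any $(X,X')\in F$. Your own ``most extreme'' example $F=\mathcal{P}^2$ already illustrates that you are missing a second mechanism: there \emph{no} pivot is needed, because every two parts have identical $F$-neighbourhoods and can simply be merged, exhibiting $G$ itself as a $(k-1)$-flip.

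The paper's fix in the residual case is different and uses a pivot between \emph{distinct} parts. Pick column-consecutive $X_1,X_2$ (so there is a $\bigcup\mathcal{P}$-path $Q_1$ between them in the first row); necessarily $(X_1,X_2)\in F$. If no $X_3$ distinguishes $X_1$ from $X_2$ in $F$, merge them and you are done without any pivot. Otherwise take $X_3\in\mathcal{R}_F(X_1,X_2)$ (after swapping if needed), use Lemma~\ref{lem:one edge} to get an edge $uv$ between $X_2$ and $X_3$ in rows $m+3,m+4$, and pivot. Since $X_1\in\mathcal{L}_F(X_2,X_3)$ and $X_2\in\mathcal{C}_F(X_2,X_3)$, Lemma~\ref{lem:pivoting} gives $(X_1|_{m+2},X_2|_{m+2})\notin F'$ in the resulting flip of $(m+2)P_n$, while $Q_1$ survives as a $\bigcup(\mathcal{P}|_{m+2})$-path; now Lemma~\ref{lem:case2} applies. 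Replacing your same-part pivot by this two-pronged argument (merge or cross-part pivot) closes the gap.
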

\begin{proof}
    Let~$F$ be a symmetric subset of~$\mathcal{P}^2$ and let ${G=((m+4)P_n)\oplus(\mathcal{P},F)}$.
    By \cref{lem:case1}, we may assume that $(X,X)\in F$ for every set $X\in\mathcal{P}$.
    By \cref{lem:case2}, we may assume that~$G$ has no $\bigcup\mathcal{P}$-path between distinct $X,X'\in\mathcal{P}$ with $(X,X')\notin F$.

    Let $X_1,X_2\in \mathcal{P}$ be distinct sets so that a subpath~$Q_1$ of the first row of $(m+4)P_n$ from a vertex in~$X_1$ to a vertex in~$X_2$ is as short as possible.
    Then~$Q_1$ is a $\bigcup\mathcal{P}$-path in~$(m+4)P_n$ between~$X_1$ and~$X_2$.
    Note that $(X_1,X_2)\in F$, because otherwise~$Q_1$ is a $\bigcup\mathcal{P}$-path of~$G$ between~$X_1$ and~$X_2$ with $(X_1,X_2)\notin F$.
    Since $(X_1,X_1),(X_2,X_2)\in F$, both~$X_1$ and~$X_2$ are in $\mathcal{C}_F(X_1,X_2)$.
    If ${\mathcal{L}_F(X_1,X_2)\cup\mathcal{R}_F(X_1,X_2)}$ is empty, then~$G$ is a $\mathcal{P}'$-flip of $(m+4)P_n$ for ${\mathcal{P}':=(\mathcal{P}\setminus\{X_1,X_2\})\cup\{X_1\cup X_2\}}$, and therefore $G$ is a $(k-1)$-flipped $(m+4)P_n$.
    Thus, we may assume that $\mathcal{L}_F(X_1,X_2)\cup\mathcal{R}_F(X_1,X_2)$ contains a set~$X_3$.
    By swapping the roles of~$X_1$ and~$X_2$ if necessary, we may assume that ${X_3\in\mathcal{R}_F(X_1,X_2)}$.
    Note that ${X_3\notin\{X_1,X_2\}}$ as ${X_1,X_2\in\mathcal{C}_F(X_1,X_2)}$.
    This also implies that $n\ge 3$.

    By \cref{lem:one edge}, $G$ has a vertex $u\in X_2$ on the $(m+3)$-th row and a vertex $v\in X_3$ on the $(m+4)$-th row such that~$u$ and~$v$ are adjacent in~$G$.
    Let
    \begin{align*}
        G'&:=(G\wedge uv)\bigl[[m+2]\times[n]\bigr],\\
        F'&:=(F\triangle\mathcal{D}_F(X_2,X_3))|_{m+2}
    \end{align*}
    Note that~$G'$ is a pivot-minor of~$G$ and by \cref{lem:pivoting}, $G'=((m+2)P_n)\oplus(\mathcal{P}|_{m+2},F')$.

    Since~$u$ has no neighbors in~$V(G')$ in $(m+4)P_n$, a vertex $w\in V(G')$ is adjacent to~$u$ in~$G$ if and only if ${\mathcal{P}(w)\in\mathcal{C}_F(X_2,X_3)\cup\mathcal{L}_F(X_2,X_3)}$.
    Similarly, a vertex $w\in V(G')$ is adjacent to~$v$ in~$G$ if and only if ${\mathcal{P}(w)\in\mathcal{C}_F(X_2,X_3)\cup\mathcal{R}_F(X_2,X_3)}$.
    Since $X_3\in\mathcal{R}_F(X_1,X_2)$, we have $X_1\in\mathcal{L}_F(X_2,X_3)$.
    Since ${(X_2,X_2),(X_2,X_3)\in F}$, we have $X_2\in\mathcal{C}_F(X_2,X_3)$.
    Thus, by the definition of~$F'$,
    \[
        (X_1|_{m+2},X_2|_{m+2})\notin F'.
    \]
    Then~$Q_1$ is a ${\bigcup(\mathcal{P}|_{m+2})}$-path of~$G'$ between $X_1|_{m+2}$ and $X_2|_{m+2}$.
    Thus, by \cref{lem:case2}, $G'$ contains a $(k-1)$-flipped $mP_n$ as a pivot-minor, and so does~$G$.
\end{proof}

We now prove \cref{prop:preprocess}.

\begin{proof}[Proof of \cref{prop:preprocess}]
    We may assume that $n\geq2$.
    Let~$G$ be a $\mathcal{P}$-flip of $(4n-3)P_n$ for a coarsening~$\mathcal{P}$ of the column set of $(4n-3)P_n$.
    Since $\abs{\mathcal{P}}\leq n$, by applying \cref{cor:cases} at most $n-1$ times, one can find a $1$-flipped $P_n$ as a pivot-minor of~$G$.
\end{proof}

\section{Path pivot-minors from $1$-flips of a path}\label{sec:1-flip}

This section is dedicated to prove that for every integer $t\geq1$, every $1$-flip of a sufficiently long path has~$P_t$ as a pivot-minor.

\begin{proposition}\label{prop:1-flip}
    For integers $t,n\geq1$, if $n\geq3(2t^2-t-1)$, then every $1$-flip of~$P_n$ has a pivot-minor isomorphic to~$P_t$.
\end{proposition}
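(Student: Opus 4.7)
Let $G$ be a $1$-flip of $P_n$ with flipped set $X\subseteq V(P_n)=\{v_1,\dots,v_n\}$, and write $X=\{v_{i_1},\dots,v_{i_k}\}$ in increasing order; set $i_0=0$ and $i_{k+1}=n+1$, and let the \emph{gap} $g_a:=i_{a+1}-i_a-1$ for $a=0,\dots,k$. The plan rests on one key structural observation: whenever $g_a\ge 1$, the induced subgraph $G[\{v_{i_a},v_{i_a+1},\dots,v_{i_{a+1}}\}]$ is exactly the cycle $C_{g_a+2}$, since the intermediate non-$X$ vertices retain only their two path-neighbors and the $1$-flip adds precisely the single chord $v_{i_a}v_{i_{a+1}}$. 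Consequently, if any $g_a\ge t-1$, then $G$ contains an induced cycle on at least $t+1$ vertices, and deleting one of its vertices exhibits $P_t$ as a pivot-minor of $G$. So I may assume $g_a\le t-2$ for every $a$. Combined with $\sum_a g_a=n-k$, this forces $k\ge (n-t+2)/(t-1)$, which is comfortably large under the hypothesis $n\ge 3(2t+1)(t-1)=3(2t^2-t-1)$.

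Under this assumption, the plan is to iteratively simplify $G$ via \cref{lem:shortening}, applied at non-$X$ interior vertices (each of which has degree exactly $2$ in $G$). The productive move is a \emph{gap-$1$ reduction}: when some $g_a=1$, let $v_i$ be the unique non-$X$ vertex between $v_{i_a}$ and $v_{i_{a+1}}$; pivot the edge $v_iv_{i_a}$ and delete $\{v_i,v_{i_a}\}$. The key computation is that the symmetric difference $N_G(v_{i_a})\triangle N_G(v_{i_{a+1}})$, restricted to the surviving vertex set, contains only the path-neighbors $v_{i_a-1}$ and $v_{i_{a+1}+1}$: all other $X$-contributions cancel because both $v_{i_a}$ and $v_{i_{a+1}}$ are $G$-adjacent to exactly the same set $X\setminus\{v_{i_a},v_{i_{a+1}}\}$. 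Applying \cref{lem:shortening} thus yields a pivot-minor $G'$ that is the $(X\setminus\{v_{i_a},v_{i_{a+1}}\})$-flip of the shortened path $v_1,\dots,v_{i_a-1},v_{i_{a+1}},v_{i_{a+1}+1},\dots,v_n$. In the new flip $|X|$ has decreased by $2$, $n$ has decreased by $2$, and the two gaps $g_{a-1},g_{a+1}$ flanking the consumed triple have merged into a single gap of size $g_{a-1}+g_{a+1}+1$. If no gap of size $1$ is currently available, a preliminary ``Case A'' shortening at a non-$X$ interior vertex whose path-neighbors are both also non-$X$ (which exists inside any gap of size $\ge 3$) contracts a non-$X$ substretch and reduces one gap by $2$ without altering $X$; iterating Case A eventually produces a gap of size $1$ on which the main move applies.

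Iterating these reductions places us in a race between two terminating cases: either the current maximum gap grows to $\ge t-1$, whereupon the cycle argument of the first paragraph finishes the proof, or $|X|$ is driven down to at most $1$, so that the residual graph is a pure path $P_{n'}$ which contains $P_t$ as an induced subgraph as soon as $n'\ge t$. Each shortening removes exactly $2$ vertices from $n$, and each gap-$1$ reduction strictly grows the current maximum gap (by at least $1$, and typically by $\min(g_{a-1},g_{a+1})+1$ when applied adjacent to the maximum); an elementary accounting shows that the total number of shortenings needed before one of the two terminating configurations is reached is $O(t)$, so the bound $n\ge 3(2t+1)(t-1)$ provides ample slack for every intermediate flip to still host the required configuration. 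The main obstacle is verifying that every shortening operation indeed produces a $1$-flip of a shorter path: this is a symmetric-difference calculation that splits into a few subcases depending on whether $v_{i_a-1}$ or $v_{i_{a+1}+1}$ lies in $X$ (i.e., adjacent ``gap-$0$'' configurations) and on whether any of the relevant vertices is at a boundary of $P_n$. In each subcase the crucial cancellation of $X$-contributions in the symmetric difference persists, so the reduction remains a bona fide $1$-flip of the shorter path, allowing the iterative scheme to continue until Step 1 of the argument applies.
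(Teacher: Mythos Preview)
Your reduction scheme has a genuine gap: it breaks down when $X$ is nearly all of $V(P_n)$. The starkest case is $X=V(P_n)$, so $G=\overline{P_n}$; then there are no non-$X$ vertices at all, neither the gap-$1$ reduction nor Case~A applies, and you are stuck immediately. A subtler but equally fatal case is $X=V(P_n)\setminus\{v_j\}$: there is exactly one gap of size~$1$ and every other gap has size~$0$. Your gap-$1$ reduction at position $a$ merges $g_{a-1},1,g_{a+1}$ into a gap of size $g_{a-1}+1+g_{a+1}=0+1+0=1$, so after the reduction you again have a $1$-flip of a shorter path with a single gap of size~$1$ and all others~$0$. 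Iterating, the maximum gap stays equal to~$1$ forever while $n$ drops by~$2$ each step; you reach $|X|\le 1$ only when the path has shrunk to at most three vertices, far too short to contain~$P_t$. So the claim that ``each gap-$1$ reduction strictly grows the current maximum gap (by at least~$1$)'' is simply false, and the ``elementary accounting'' you allude to cannot be completed.

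The underlying problem is that all your moves pivot at degree-$2$ vertices \emph{outside}~$X$, so they have no purchase when $G$ is close to $\overline{P_n}$. The paper proceeds differently: after establishing (as you do) that $|X|$ must be large, it pivots on an edge \emph{between two vertices of~$X$} via \cref{lem:pivot0}, and uses this in \cref{lem:reduce} and \cref{cor:reduce} to show that any $1$-flip with $|X|\ge 4s-3$ contains $\overline{P_s}$ as a pivot-minor. A separate argument (\cref{lem:all odd sub,lem:all odd}) then extracts $P_t$ from $\overline{P_s}$ when $s\ge\tfrac32 t+1$. To salvage your approach you would need a reduction that works inside~$X$ when non-$X$ vertices are scarce, which essentially amounts to reproving \cref{lem:pivot0}.
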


To prove \cref{prop:1-flip}, we will use the following lemmas.
We refer to \cref{fig:pivot0} for an illustration of \cref{lem:pivot0}.

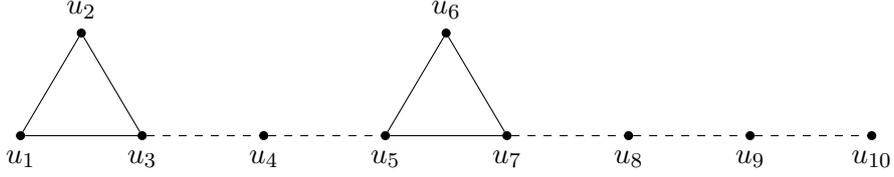
\begin{figure}
    \centering
    \begin{tikzpicture}[scale=0.8]
        \tikzstyle{v}=[circle, draw, solid, fill=black, inner sep=0pt, minimum width=3pt]

        \node[v,label=below:{$u_1$}] (u1) at (0,0) {};
        \node[v,label=above:{$u_2$}] (u2) at (1,1.7) {};
        \node[v,label=below:{$u_3$}] (u3) at (2,0) {};
        \node[v,label=below:{$u_4$}] (u4) at (4,0) {};
        \node[v,label=below:{$u_5$}] (u5) at (6,0) {};
        \node[v,label=above:{$u_6$}] (u6) at (7,1.7) {};
        \node[v,label=below:{$u_7$}] (u7) at (8,0) {};
        \node[v,label=below:{$u_8$}] (u8) at (10,0) {};
        \node[v,label=below:{$u_9$}] (u9) at (12,0) {};
        \node[v,label=below:{$u_{10}$}] (u10) at (14,0) {};

        \draw (u1)--(u2)--(u3)--(u1);
        \draw (u5)--(u6)--(u7)--(u5);
        \draw[dashed] (u3)--(u4);
        \draw[dashed] (u4)--(u5);
        \draw[dashed] (u7)--(u8);
        \draw[dashed] (u8)--(u9);
        \draw[dashed] (u9)--(u10);
    \end{tikzpicture}
    \caption{The graph~$G$ in \cref{lem:pivot0} for $P=u_1u_2\cdots u_{10}$, $a=u_4$, and $b=u_8$ where the bottom vertices are the vertices in~$X$.
    Edges between non-consecutive bottom vertices are not shown.
    We remark that ${G\wedge u_4u_8-\{u_4,u_8\}}$ is the $\{u_1,u_{10}\}$-flip of~$P_8$, which is a cycle of length~$8$.}
    \label{fig:pivot0}
\end{figure}

\begin{lemma}\label{lem:pivot0}
    Let~$G$ be the $X$-flip of a path $P$ for a set ${X\subseteq V(P)}$.
    Let $a,b\in X$ with $N_{P}[a]\cap N_{P}[b]=\emptyset$, and let~$P'$ be the path obtained from ${P-\{a,b\}}$ by making each of $N_P(a)$ and $N_P(b)$ a clique.
    Then ${G\wedge ab-\{a,b\}}$ is the $(X\triangle
    N_{P}[\{a,b\}])$-flip of~$P'$.
\end{lemma}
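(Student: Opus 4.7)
The plan is to read off the structure of $G \wedge ab - \{a,b\}$ from Lemma~\ref{lem:Oum2004} and then compare it to the $(X \triangle N_P[\{a,b\}])$-flip of $P'$ by a case analysis. First, I would check that $ab$ is an edge of $G$: since $a, b \in X$ and $N_P[a] \cap N_P[b] = \emptyset$ forces $ab \notin E(P)$, the $X$-flip creates the edge $ab$. From the definition of an $X$-flip one computes $N_G(a) = (X \setminus N_P[a]) \cup (N_P(a) \setminus X)$, with the symmetric formula for $b$. Using $N_P[a] \cap N_P[b] = \emptyset$, a routine calculation then yields
\begin{align*}
    C &:= N_G(a) \cap N_G(b) = X \setminus N_P[\{a,b\}],\\
    L &:= N_G(a) \setminus N_G[b] = (X \cap N_P(b)) \cup (N_P(a) \setminus X),\\
    R &:= N_G(b) \setminus N_G[a] = (X \cap N_P(a)) \cup (N_P(b) \setminus X).
\end{align*}
By Lemma~\ref{lem:Oum2004}, $G \wedge ab - \{a,b\}$ is obtained from $G - \{a,b\}$ by toggling every adjacency whose two endpoints lie in distinct sets among $C, L, R$.

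Next, I would partition $V(P) \setminus \{a, b\}$ into six classes: let $A_i$ denote the vertices in $X$ and $B_i$ the vertices outside $X$, where $i = 1$ means outside $N_P[\{a, b\}]$ and $i = 2, 3$ mean in $N_P(a)$ or $N_P(b)$ respectively. Then $C = A_1$, $L = A_3 \cup B_2$, $R = A_2 \cup B_3$, while $B_1$ lies outside $C \cup L \cup R$; moreover $Y := X \triangle N_P[\{a, b\}]$ intersected with $V(P) \setminus \{a, b\}$ equals $A_1 \cup B_2 \cup B_3$. Finally, $P'$ differs from $P - \{a, b\}$ only by at most one new edge inside $N_P(a) = A_2 \cup B_2$ and one inside $N_P(b) = A_3 \cup B_3$.

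What remains is to verify, for every pair of distinct $u, v \in V(P) \setminus \{a, b\}$, that adjacency of $u$ and $v$ in $G \wedge ab - \{a, b\}$ (which equals the XOR of three $0/1$-indicators: whether $uv \in E(P)$, whether both of $u, v$ lie in $X$, and whether $u$ and $v$ lie in different parts among $C, L, R$) agrees with adjacency in the $Y$-flip of $P'$ (the XOR of whether $uv \in E(P')$ and whether both of $u, v$ lie in $Y$). This reduces to a single parity identity among these indicators, and can be verified in the $\binom{6}{2} + 6 = 21$ cases indexed by the pair of classes of $u$ and $v$. I expect the main obstacle to be bookkeeping rather than any conceptual subtlety: the delicate cases are $(A_2, A_2)$, $(A_2, B_2)$, $(B_2, B_2)$ and their $b$-side analogues, where $\{u, v\} = N_P(a)$ or $\{u, v\} = N_P(b)$ and a new edge of $P'$ appears; in each, $u$ and $v$ lie in the same part of $\{C, L, R\}$, so the difference between the two sides is accounted for exactly by the difference between their joint $X$-membership and their joint $Y$-membership, matching the new edge. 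The remaining $15$ class-pairs reduce to immediate verifications, most trivially when at least one endpoint lies in $B_1$ (which affects neither the pivot toggling nor the $Y$-status).
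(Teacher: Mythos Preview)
Your proposal is correct and follows essentially the same approach as the paper: both apply \cref{lem:Oum2004} to identify the three sets $C,L,R$ (the paper phrases this as $N_G(a)\cap N_G(b)=X\setminus N_P[\{a,b\}]$ and $N_G(a)\triangle N_G(b)=N_P[\{a,b\}]$, which is equivalent to your explicit description of $C,L,R$), and then verify adjacency pair by pair. Your organisation via the six-class partition and the single parity identity $(\text{both in }X)\oplus(\text{both in }Y)\oplus(\text{toggled})=[\{u,v\}\in\{N_P(a),N_P(b)\}]$ is a tidy repackaging of the same case analysis the paper carries out more narratively, but there is no substantive difference in method.
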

\begin{proof}
    Let $G':=G\wedge ab-\{a,b\}$ and let $X':=X\triangle
    N_{P}[\{a,b\}]$.
    We show that~$G'$ is the $X'$-flip of~$P'$.
    Since $N_P[a]\cap N_P[b]=\emptyset$, we observe that each vertex in $N_P[\{a,b\}]$ is adjacent to exactly one of~$a$ and~$b$ in~$G$.
    Therefore,
    \begin{align*}
        N_G(a)\triangle N_G(b)&=N_P[\{a,b\}],\\
        N_G(a)\cap N_G(b)&=X\setminus N_P[\{a,b\}].
    \end{align*}
    By \cref{lem:Oum2004}, we have $G'-N_P(\{a,b\})=G-N_P[\{a,b\}]$.
    Since $X'\setminus N_P(\{a,b\})=X\setminus N_P[\{a,b\}]$, by the definition of a flipped graph, $G'-N_P(\{a,b\})$ is the $(X'\setminus N_P(\{a,b\}))$-flip of $P-N_P[\{a,b\}]$.

    Let~$u$ and~$v$ be distinct vertices of~$G'$ with $u\in N_P(\{a,b\})$.
    To complete the proof, we show that $uv\in E(G')$ if and only if either
    \begin{itemize}
        \item $uv\notin E(P')$ and both~$u$ and~$v$ are in~$X'$, or
        \item $uv\in E(P')$ and~$u$ or~$v$ is not in~$X'$.
    \end{itemize}
    Without loss of generality, assume that $u\in N_P(a)$.
    If $v\notin X\cup N_P(\{a,b\})$, then $v\notin X'$ and it is readily seen that $uv\in E(G')$ if and only if $uv\in E(P')$.
    In addition, if $v\in X\setminus N_P(\{a,b\})$, then $v\in X'$ and by \cref{lem:Oum2004}, it holds that $uv\in E(G')$ if and only if either
    \begin{itemize}
        \item $uv\notin E(P')$ and $u\notin X$, or
        \item $uv\in E(P')$ and $u\in X$.
    \end{itemize}
    In other words, $uv\in E(G')$ if and only if either
    \begin{itemize}
        \item $uv\notin E(P')$ and both~$u$ and~$v$ are in~$X'$, or
        \item $uv\in E(P')$ and $u\notin X'$.
    \end{itemize}
    Thus, we may assume that $v\in N_P(\{a,b\})$.

    We first suppose that $v\in N_P(a)$.
    Note that $uv\in E(P')$.
    By \cref{lem:Oum2004}, we have that $uv\in E(G')$ if and only if~$u$ or~$v$ is in~$X$.
    Therefore, $uv\in E(G')$ if and only if~$u$ or~$v$ is not in~$X'$.
    
    We now suppose that $v\in N_P(b)$.
    If $uv\notin E(P)$, then by \cref{lem:Oum2004}, we have that $uv\in E(G')$ if and only if neither~$u$ nor~$v$ is in~$X$.
    Therefore, $uv\in E(G')$ if and only if both~$u$ and~$v$ are in~$X'$.
    If ${uv\in E(P)}$, then by \cref{lem:Oum2004}, we have that $uv\in E(G')$ if and only if~$u$ or~$v$ is in~$X$.
    Therefore, $uv\in E(G')$ if and only if~$u$ or~$v$ is not in~$X'$.
    This completes the proof.
\end{proof}

We will use the following lemma which is essentially from Kim and Oum~\cite[Lemma 3.3]{KO2020} written for cycles.
For integers $s\ge t\ge 0$, an \emph{$(s,t)$-path} is a graph isomorphic to the $X$-flip of~$P_s$ for some set~$X$ of~$t$ consecutive vertices of the path~$P_s$.

\begin{lemma}\label{lem:all odd sub}
    Let $s\ge t\ge 6$ be integers.
    Every $(s,t)$-path has an ${(s-2,t-6)}$-path as a pivot-minor.
\end{lemma}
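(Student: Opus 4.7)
The plan is to apply \cref{lem:pivot0} a single time, using a pair of vertices placed near one end of the flipped interval. Let $G$ be the $X$-flip of a path $P = v_1 v_2 \cdots v_s$, where $X = \{v_i, v_{i+1}, \ldots, v_{i+t-1}\}$ is a block of $t \ge 6$ consecutive vertices. I would set $a := v_{i+1}$ and $b := v_{i+4}$. Since $t \ge 6$, both $a$ and $b$ lie in $X$, and since $i \ge 1$ and $i + 5 \le i + t - 1 \le s$, the closed neighborhoods $N_P[a] = \{v_i, v_{i+1}, v_{i+2}\}$ and $N_P[b] = \{v_{i+3}, v_{i+4}, v_{i+5}\}$ are disjoint subsets of $X$. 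Because $a$ and $b$ both lie in $X$ but are at distance $3$ in $P$, they are adjacent in the flip $G$, so the edge $ab$ is available to pivot on.

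By \cref{lem:pivot0}, the graph $G \wedge ab - \{a,b\}$ is the $X'$-flip of the path $P'$ obtained from $P - \{a,b\}$ by adding the two clique edges $v_i v_{i+2}$ and $v_{i+3} v_{i+5}$, where
\[
    X' \;=\; X \triangle N_P[\{a,b\}] \;=\; \{v_{i+6}, v_{i+7}, \ldots, v_{i+t-1}\}.
\]
This set has exactly $t - 6$ elements. The path $P'$ has $s - 2$ vertices, and in its natural order $v_1, \ldots, v_i, v_{i+2}, v_{i+3}, v_{i+5}, v_{i+6}, \ldots, v_s$ the set $X'$ appears as a contiguous block. Identifying $P'$ with $P_{s-2}$ then exhibits $G \wedge ab - \{a,b\}$ as the $Y$-flip of $P_{s-2}$ for some set $Y$ of $t-6$ consecutive vertices, which is precisely an $(s-2, t-6)$-path.

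The main thing to be careful about in this plan is that $X'$ really is a block of consecutive vertices of $P'$, and this is exactly why $a$ and $b$ are chosen near one end of $X$ rather than in its interior. If instead $a$ and $b$ were placed deep inside $X$, then $X \setminus N_P[\{a,b\}]$ would split into three non-contiguous pieces separated by the deleted and rewired vertices, so the resulting flip set would fail to be a block of consecutive vertices of $P'$ and the conclusion would break. Apart from that single placement consideration, the proof collapses to one direct application of \cref{lem:pivot0}, and the boundary case $t = 6$ is handled automatically since the $\emptyset$-flip of $P_{s-2}$ is $P_{s-2}$ itself, i.e., an $(s-2,0)$-path.
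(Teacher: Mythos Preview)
Your proof is correct and follows the same strategy as the paper: a single application of \cref{lem:pivot0}. The only difference is cosmetic --- the paper pivots on the second and second-to-last vertices of $X$ (so $N_P[\{a,b\}]$ trims three vertices from each end of $X$), whereas you pivot on the second and fifth vertices (trimming six from the left end); either way $X'$ is a contiguous block of $t-6$ vertices in $P'$.
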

\begin{proof}
    Let~$G$ be the $X$-flip of~$P_s$ for some set~$X$ of~$t$ consecutive vertices of~$P_s$.
    Let ${v_1,\ldots,v_t}$ be the vertices in $X$ in the order they appear in $P_s$.
    By \cref{lem:pivot0}, $G\wedge v_2v_{t-1}-\{v_2,v_{t-1}\}$ is an ${(s-2,t-6)}$-path.
\end{proof}

\begin{lemma}\label{lem:all odd}
    For integers $t\geq1$ and $s\ge \frac{3}{2}t+1$, the graph $\overline{P_s}$ has a pivot-minor isomorphic to~$P_t$.
\end{lemma}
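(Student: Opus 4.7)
My plan is to iteratively apply \cref{lem:all odd sub} to $\overline{P_s}$, which is the $V(P_s)$-flip of $P_s$ and hence an $(s,s)$-path in our terminology, possibly after a preliminary application of \cref{lem:pivot0}. Tracking the pivots shows that after $k$ applications of \cref{lem:all odd sub} to $\overline{P_s}$, the resulting $(s-2k,s-6k)$-path has its flipped set $X$ of size $r:=s-6k$ placed consecutively in the middle of the underlying path, with $2k$ unflipped vertices on each side.

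To extract $P_t$ from this $(s-2k,r)$-path I will use two simple observations. If $r\le 1$, the flip does nothing (on at most one vertex) and the graph is isomorphic to $P_{s-2k}$. If $r\ge 3$, the first and last vertices of $X$ are adjacent in the flipped graph, since their non-edge in $P_{s-2k}$ is toggled to an edge by the $X$-flip; concatenating the left path of $2k$ vertices, this bridging edge, and the right path of $2k$ vertices then produces an induced $P_{4k+2}$.

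When $s\not\equiv 2\pmod 6$, I take $k=\lfloor s/6\rfloor$, so that $r=s\bmod 6\in\{0,1,3,4,5\}$, and verify in each residue class that the hypothesis $s\ge\tfrac{3}{2}t+1$ yields an induced path of length at least $t$. The tightest case is $r=4$, where $4k+2=(2s-2)/3\ge t$ holds exactly at equality $s=\tfrac{3}{2}t+1$; the cases $r\in\{0,1\}$ and $r\in\{3,5\}$ give strictly weaker numerical requirements.

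The main obstacle will be the case $s\equiv 2\pmod 6$, where direct iteration terminates with $r=2$, giving a disconnected $(s',2)$-path from which no sufficiently long induced path can be extracted. I will circumvent this by first applying \cref{lem:pivot0} with $a=u_1$ and $b=u_s$ to $\overline{P_s}$, obtaining an $(s-2,s-4)$-path in which the new flipped set has exactly one unflipped vertex on each side. Since $s-4\equiv 4\pmod 6$, iterating \cref{lem:all odd sub} a further $k'=(s-8)/6$ times produces a $(\tfrac{2s+2}{3},4)$-path with $1+2k'$ unflipped vertices on each side of the flipped set, and the bridging-edge trick then yields an induced $P_{4k'+4}=P_{(2s-4)/3}$. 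A short arithmetic check shows that no integer $t$ satisfies $s=\tfrac{3}{2}t+1$ together with $s\equiv 2\pmod 6$, so under the hypothesis we actually have $s\ge\tfrac{3}{2}t+2$ in this case, which is exactly what makes $(2s-4)/3\ge t$.
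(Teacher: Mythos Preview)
Your core strategy—iterating \cref{lem:all odd sub} down to small $r$—matches the paper, and your bridging-edge extraction for $r\in\{3,4\}$ recovers exactly the same induced $P_{4k+2}$ that the paper obtains by deleting the inner vertices $u_2,u_{r-1}$ of~$X$. There is, however, a small slip: for $r=5$ the bridging path has $(2s-4)/3$ vertices, and the inequality $(2s-4)/3\ge t$ is equivalent over the reals to $s\ge\tfrac{3}{2}t+2$, not a weaker condition. This is still salvageable by the same residue trick you use for $r=2$ (indeed $\lceil\tfrac{3}{2}t+1\rceil\bmod 6\in\{0,1,3,4\}$ for every integer $t\ge1$, so $s\equiv 5\pmod 6$ already forces $s\ge\tfrac{3}{2}t+2$), but your sentence lumping $r=5$ with the ``strictly weaker'' cases is not correct as written. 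Relatedly, your arithmetic check for $r=2$ only literally treats the equation $s=\tfrac{3}{2}t+1$, hence only even~$t$; for odd~$t$ one must also rule out $s=\tfrac{3t+3}{2}$, which holds for the same reason.

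Where your argument genuinely diverges from the paper is the handling of $r\in\{2,5\}$. The paper sidesteps both cases at once: it proves the \emph{stronger} statement that $s\ge\tfrac{3}{2}t$ together with $r\in\{0,1,3,4\}$ already suffices, and then (implicitly) uses that $\overline{P_{s-1}}$ is an induced subgraph of $\overline{P_s}$ with residue in $\{1,4\}$ and still satisfying $s-1\ge\tfrac{3}{2}t$. Your preliminary pivot on $u_1u_s$ for $r=2$ is correct, but it is considerably more work than simply dropping an endpoint. In short: your argument is sound after the two small arithmetic fixes above, but the paper's one-vertex reduction is the cleaner way to dispose of the bad residues.
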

\begin{proof}
    Note that $\overline{P_s}$ is an $(s,s)$-path.
    Let $r\in[5]\cup\{0\}$ such that $r\equiv s\pmod 6$.
    We prove a stronger statement that $s\ge \frac{3}{2}t$ and $r\in\{0,1,3,4\}$ imply that~$\overline{P_s}$ has a pivot-minor isomorphic to~$P_t$.
    By \cref{lem:all odd sub}, we can find a pivot-minor~$H$ of~$\overline{P_s}$ isomorphic to an $(s-2\lfloor s/6\rfloor,r)$-path.
    Let $X$ be the set of $r$ consecutive vertices of $P_{s-2\lfloor s/6\rfloor}$ such that $H$ is the $X$-flip of $P_{s-2\lfloor s/6\rfloor}$.
    Let $u_1,\dots,u_r$ be the vertices in $X$ in the order they appear in $P_{s-2\lfloor s/6\rfloor}$.
    Note that $s-2\lfloor  s/6\rfloor=s-2s/6+r/3\ge t+r/3$.

    Since a $(t,0)$-path and a $(t,1)$-path are isomorphic to $P_t$, we are done if $r\in\{0,1\}$.
    If $r\in\{3,4\}$, then $H-\{u_2,u_{r-1}\}$ is isomorphic to $P_{s-2\lfloor s/6\rfloor-\lceil r/3\rceil}$, which contains an induced subgraph isomorphic to~$P_t$, as $s-2\lfloor s/6\rfloor\geq t+\lceil r/3\rceil$.
\end{proof}

We show that for the $X$-flip~$G$ of a path, if $X$ is sufficiently large, then~$G$ has~$P_t$ as a pivot-minor.

\begin{lemma}\label{lem:reduce}
    Let~$G$ be the $X$-flip of a path $P$ and a set $X\subseteq V(P)$.
    Let $\ell\geq1$ and $m\geq0$ be integers.
    If the first~$\ell$ vertices of~$P$ are in~$X$ and ${\abs{X}\geq\ell+4m}$, then~$G$ has a pivot-minor isomorphic to $\overline{P_{\ell+m}}$.
\end{lemma}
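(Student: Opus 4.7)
The plan is to induct on $m$. The inductive step reduces $(\ell, m)$ to $(\ell+1, m-1)$, preserving the target $\overline{P_{\ell+m}}=\overline{P_{(\ell+1)+(m-1)}}$ and weakening the hypothesis to $|X'|\geq(\ell+1)+4(m-1)=\ell+4m-3$, so up to three elements of $X$ may be lost per step. The base case $m=0$ is immediate: the first $\ell$ vertices lie in $X$ and induce $P_\ell$ in $P$, so they induce $\overline{P_\ell}$ in $G$.

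For $m\geq1$, I first delete any non-$X$ leaf from the right end of $P$, each of which is a pendant of $G$; since $|X|>\ell$, this process preserves $X$ and the initial $\ell$-block and leaves a graph with last vertex $v_n\in X$. If $v_{\ell+1}\in X$, then the first $\ell+1$ vertices are in $X$ and the inductive hypothesis applies to $(\ell+1, m-1)$. Otherwise $v_{\ell+1}\notin X$, and while $v_{\ell+1}$ and $v_{\ell+2}$ are both outside $X$, the edge $v_{\ell+1}v_{\ell+2}$ lies in $G$ and $v_{\ell+1}$ has degree two in $G$; applying \cref{lem:shortening} to pivot this edge and delete both endpoints yields, by a direct neighborhood computation (case-splitting on whether $v_{\ell+3}\in X$), the $X$-flip of $P-\{v_{\ell+1},v_{\ell+2}\}$, which preserves $|X|$ and the first $\ell$ vertices in $X$. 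This iteration either returns to the state $v_{\ell+1}\in X$ or terminates with $v_{\ell+1}\notin X$ and $v_{\ell+2}\in X$.

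In this terminating state, I apply \cref{lem:pivot0} with $a=v_{\ell+2}$ and $b=v_n$: since $v_{\ell+1}\in V(P)\setminus X$ gives the strict inequality $|V(P)|>|X|\geq\ell+4m$, we have $|V(P)|\geq\ell+5$, so the closed $P$-neighborhoods $\{v_{\ell+1},v_{\ell+2},v_{\ell+3}\}$ and $\{v_{n-1},v_n\}$ are disjoint. The resulting pivot-minor is the $X'$-flip of a path $P'$ on $|V(P)|-2$ vertices, where $X'=X\triangle\{v_{\ell+1},v_{\ell+2},v_{\ell+3},v_{n-1},v_n\}$; the first $\ell+1$ vertices of $P'$ are $v_1,\dots,v_{\ell+1}$, all in $X'$ (the first $\ell$ are unchanged and $v_{\ell+1}$ is toggled into $X'$), and in the worst case $v_{\ell+3},v_{n-1}\in X$ we still have $|X'|=|X|-3\geq(\ell+1)+4(m-1)$. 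The inductive hypothesis then yields $\overline{P_{\ell+m}}$ as a pivot-minor of $G$. The main obstacle is the careful bookkeeping showing that each shortening step and the final pivot produce again an $X$-flip of a path with the prescribed initial $X$-block, which reduces to case analyses on the $X$-membership of $v_{\ell+3}$ in each shortening step and of both $v_{\ell+3}$ and $v_{n-1}$ in the final pivot.
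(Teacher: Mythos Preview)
Your proof is correct and follows essentially the same approach as the paper's: both reduce (via \cref{lem:shortening}) to the situation where $v_{\ell+1}\notin X$, $v_{\ell+2}\in X$, and the last vertex lies in $X$, then apply \cref{lem:pivot0} to the pair $v_{\ell+2},v_n$ to gain one more initial $X$-vertex at a cost of at most three elements of $X$. The only organizational differences are that the paper inducts on $|V(P)|$ rather than $m$, and it reduces globally to ``$V(G)\setminus X$ is independent in $G$'' instead of only shortening locally at position $\ell+1$; these do not affect the argument. One small notational point: when you write ``the $X$-flip of $P-\{v_{\ell+1},v_{\ell+2}\}$'' after a shortening step, you mean the path obtained by \emph{suppressing} $v_{\ell+1},v_{\ell+2}$ (so that $v_\ell v_{\ell+3}$ becomes an edge), not the disconnected graph $P-\{v_{\ell+1},v_{\ell+2}\}$.
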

\begin{proof}
    We proceed by induction on~$\abs{V(P)}$.
    The statement obviously holds if ${m=0}$ or $V(G)=X$.
    Thus, we may assume that ${m\geq1}$ and $V(G)\neq X$.
    By \cref{lem:shortening} and the inductive hypothesis, we may further assume that ${V(G)\setminus X}$ is an independent set of~$G$.
    We may also assume that both ends of~$P$ are in~$X$, as otherwise we can remove one end not in $X$ and apply the inductive hypothesis.
    Let $n:=\abs{V(P)}$ and let ${u_1,u_2,\ldots,u_n}$ be the vertices of~$P$ in the order they appear in~$P$ so that $u_1,u_2,\ldots,u_\ell\in X$.
    
    Let~$j$ be the smallest integer in~$[n]$ such that $u_j\notin X$.
    Note that $u_{j-1}\in X$ and $j\geq\ell+1$.
    Since $\abs{X}\geq\ell+4m>(\ell+1)+4(m-1)$, if $j\geq\ell+2$, then~$G$ has a pivot-minor isomorphic to $\overline{P_{\ell+1+m-1}}$ by the inductive hypothesis.
    Thus, we may assume that $j=\ell+1$.
    Since $V(G)\setminus X$ is an independent set of~$G$, $u_{\ell+2}$ is in~$X$.
    Since ${\abs{X}\geq\ell+4m\geq\ell+4}$, we have ${n\geq\ell+5}$, and therefore $N_{P}[u_{\ell+2}]\cap N_P[u_n]=\emptyset$.
    By \cref{lem:pivot0}, ${G':=G\wedge u_{\ell+2}u_n}-\{u_{\ell+2},u_n\}$ is the $X'$-flip of~$P_{n-2}$ for ${X':=X\triangle N_P[\{u_{\ell+2},u_n\}]}$.
    Note that ${u_i\in X'}$ for every $i\in[\ell+1]$.
    Since
    \[
        (X\cup\{u_\ell+1\})\setminus\{u_{\ell+2},u_{\ell+3},u_{n-1},u_{n}\}\subseteq X',
    \]
    we have ${\abs{X'}\geq\ell+1+4m-4=(\ell+1)+4(m-1)}$.
    By the inductive hypothesis, $G'$ has a pivot-minor isomorphic to $\overline{P_{\ell+1+m-1}}$, and therefore $G$ has a pivot-minor isomorphic to $\overline{P_{\ell+m}}$.
\end{proof}

\cref{lem:reduce} has the following corollary.

\begin{corollary}\label{cor:reduce}
    Let~$G$ be the $X$-flip of a path~$P$ for a set $X\subseteq V(P)$.
    For an integer $t\geq1$, if $\abs{X}\geq4t-3$, then~$G$ has a pivot-minor isomorphic to~$\overline{P_t}$.
\end{corollary}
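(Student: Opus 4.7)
The plan is to reduce the corollary directly to \cref{lem:reduce} with the parameter choices $\ell=1$ and $m=t-1$. With these choices the conclusion of the lemma is a pivot-minor isomorphic to $\overline{P_{\ell+m}}=\overline{P_t}$, and the arithmetic hypothesis becomes $\abs{X}\geq \ell+4m=1+4(t-1)=4t-3$, which is exactly the hypothesis of the corollary. The only remaining issue is the structural hypothesis that the first vertex of the underlying path lies in $X$.

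To secure that structural hypothesis, I would first delete from $G$ the initial segment of $P$ preceding the first vertex of $P$ that belongs to $X$. Such a first vertex exists, because the inequality $\abs{X}\geq 4t-3\geq 1$ forces $X$ to be non-empty. Since each deleted vertex is a vertex of $P\setminus X$, the restriction of the $X$-flip operation to the surviving vertices coincides with the $X$-flip of the shortened path $P'$; in other words, the induced subgraph $G':=G[V(P')]$ is the $X$-flip of $P'$, and its first vertex lies in $X$ by construction. Because no vertex of $X$ was deleted, we still have $\abs{X}\geq 4t-3$ relative to $P'$. Induced subgraphs are pivot-minors, so any pivot-minor of $G'$ is a pivot-minor of $G$.

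Applying \cref{lem:reduce} to $G'$ with $\ell=1$ and $m=t-1$ then yields a pivot-minor of $G'$ isomorphic to $\overline{P_t}$, which is a pivot-minor of $G$ as required. I do not anticipate a genuine obstacle: the only point worth spelling out carefully is the first paragraph's reduction, namely that deleting a prefix of $P$ whose vertices lie outside $X$ yields an $X$-flip of a shorter path, and this is immediate from the definition of the $X$-flip.
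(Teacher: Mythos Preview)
Your proposal is correct and essentially identical to the paper's proof: both pass to an induced subgraph so that the first vertex of the underlying path lies in $X$ (the paper trims both ends, you trim only a prefix, but either suffices), and then invoke \cref{lem:reduce} with $\ell=1$ and $m=t-1$.
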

\begin{proof}
    By taking a subpath, we may assume that~both ends of $P$ are in~$X$.
    Since
    \(
        \abs{X}\geq4t-3=1+4(t-1),
    \)
    by \cref{lem:reduce} for ${\ell=1}$ and ${m=t-1}$, the statement holds.
\end{proof}

We now prove \cref{prop:1-flip}.

\begin{proof}[Proof of \cref{prop:1-flip}]
    We may assume that~$G$ has no~$P_t$ as an induced subgraph.
    Observe that every connected induced subgraph of~$G$ with at most one vertex in~$X$ is an induced path.
    Since~$G$ has no~$P_t$ as an induced subgraph, this implies that every component of $G-X$ has at most ${t-2}$ vertices.

    Suppose that $\abs{X}\leq6t+2$.
    Since $G-X=P_n-X$ has at most $\abs{X}+1\leq6t+3$ components,
    \[
        n\leq (6t+3)(t-2)+(6t+2)=3(2t^2-t-1)-1,
    \]
    a contradiction.
    Hence, $\abs{X}\geq6t+3$.

    Let $s:=\lceil\frac{3}{2}t+1\rceil$.
    Note that
    \[
        4s-3=4\left\lceil\frac{3}{2}t+1\right\rceil-3\leq4\left(\frac{3}{2}t+\frac{3}{2}\right)-3=6t+3\leq\abs{X}.
    \]
    Thus, by \cref{cor:reduce,lem:all odd}, $G$ has a pivot-minor isomorphic to~$P_t$.
\end{proof}

\section{Completing the proof of the main theorem}\label{sec:proof}

We now prove \cref{thm:main}.

\begin{proof}[Proof of \cref{thm:main}]
    Let $n:=3(2t^2-t-1)$ and let $s:=4n-3$.
    For the function~$g$ in \cref{thm:Mahlmann25}, we set~$f(t)$ as~$g(s)$.
    We show that every graph~$G$ of rank-depth at least~$f(t)$ has~$P_t$ as a pivot-minor.

    By \cref{thm:Mahlmann25}, $G$ has an induced subgraph~$H$ isomorphic to $K_s\tri K_s$, $K_s\tri\overline{K_s}$, $\overline{K_s}\tri\overline{K_s}$, or a flipped $sP_s$.
    We may assume that~$H$ is not isomorphic to $K_s\tri K_s$.
    If~$H$ is isomorphic to $K_s\tri\overline{K_s}$ or $\overline{K_s}\tri\overline{K_s}$, then by \cref{lem:second,lem:third}, $H$ has a pivot-minor isomorphic to~$P_t$.
    Hence, we may assume that~$H$ is a flipped $sP_s$.

    By \cref{prop:preprocess}, $H$ has a pivot-minor~$H'$ isomorphic to a $1$-flip of~$P_n$.
    By \cref{prop:1-flip}, $H'$ has a pivot-minor isomorphic to~$P_t$.
\end{proof}

\paragraph{Acknowledgments.}
The authors would like to thank Donggyu Kim and Rose McCarty for bringing the result of M\"{a}hlmann~\cite{Mahlmann25arxiv} to our attention.
This research was initiated at the Vertex-Minor Workshop in Jeju organized by the IBS Discrete Mathematics Group in 2023.

\providecommand{\bysame}{\leavevmode\hbox to3em{\hrulefill}\thinspace}
\providecommand{\MR}{\relax\ifhmode\unskip\space\fi MR }
\providecommand{\MRhref}[2]{%
  \href{http://www.ams.org/mathscinet-getitem?mr=#1}{#2}
}
\providecommand{\href}[2]{#2}

\end{document}